\newcommand{\bbC}{\mathbb{C}}
\newcommand{\bbN}{\mathbb{N}}
\newcommand{\bbR}{\mathbb{R}}
\newcommand{\calA}{\mathcal{A}}
\newcommand{\calB}{\mathcal{B}}
\newcommand{\calE}{\mathcal{E}}
\newcommand{\calL}{\mathcal{L}}
\newcommand{\R}{\bbR}
\newcommand{\C}{\bbC}
\DeclarePairedDelimiter{\norm}{\lVert}{\rVert}
\DeclarePairedDelimiter{\abs}{\lvert}{\rvert}
\DeclarePairedDelimiter{\set}{\lbrace}{\rbrace}
\DeclareMathOperator{\id}{id}
\DeclareMathOperator{\one}{\mathbbm{1}}
\newcommand{\argument}{\mathord{\,\cdot\,}}
\newcommand{\ue}{\mathrm{e}}
\newcommand{\ud}{\mathrm{d}}
\theoremstyle{definition}
\newtheorem{definition}{Definition}[section]
\theoremstyle{plain}
\newtheorem{proposition}[definition]{Proposition}
\newtheorem{lemma}[definition]{Lemma}
\newtheorem{theorem}[definition]{Theorem}
\newtheorem{corollary}[definition]{Corollary}
\numberwithin{equation}{section}
\begin{document}

\normalem

\title[Uniform convergence for stochastic hybrid models]{Uniform convergence of solutions to stochastic hybrid models of gene regulatory networks}
\author[Alexander Dobrick]{Alexander Dobrick
\orcidlink{0000-0002-3308-3581}
}
\address[A.\ Dobrick]{Arbeitsbereich Analysis, Christian-Albrechts-Universit\"at zu Kiel, \linebreak Heinrich-Hecht-Platz 6, 24118 Kiel, Germany}
\email{dobrick@math.uni-kiel.de}

\author[Julian Hölz]{Julian Hölz
\orcidlink{0000-0001-5058-9210}
}
\address[J.\ Hölz]{Bergische Universität Wuppertal, Fakultät für Mathematik und Naturwissenschaften, Gaußstr.\ 20, 42119 Wuppertal, Germany}
\email{hoelz@uni-wuppertal.de}

\subjclass[]{35B40, 47D06, 47D07}
\keywords{mathematical biology, stochastic semigroups, operator norm convergence}
\date{\today}
\begin{abstract}
    In a recent paper by Kurasov, Lück, Mugnolo and Wolf, a hybrid gene regulatory network was proposed to model gene expression dynamics by using a stochastic system of coupled partial differential equations. In more recent work, the existence and strong convergence of the solutions to equilibrium was proven. In this article, we improve upon their result by showing that the convergence rate is independent of the initial state, therefore proving that the solutions converge not only strongly but even uniformly to equilibrium. To this end, we make use of a recent convergence theorem for stochastic, irreducible semigroups that contain partial integral operators. 
\end{abstract}

\maketitle

\section{Introduction}

Let $a, b, c, d \in \bbR$ such that $b, d > 0$ and $\tfrac{a}{b} < \tfrac{c}{d}$. Set $I \coloneqq [\frac a b, \frac c d]$ and let $\nu, \mu: I \to (0, \infty)$ be continuous. Given an initial state $u_0 \in L^1(I; \bbC^2)$, we are interested in the long-term behaviour of the solutions to the system
\begin{equation} \label{eq:the-pde}
    \left\lbrace
    \begin{aligned}
        \frac {\ud} {\ud t} u(t,x) &= - \frac {\ud} {\ud x}
        \begin{pmatrix}
            (a - b x) u_1(t,x) \\
            (c - d x) u_2(t,x)
        \end{pmatrix}
        +
        \begin{pmatrix}
    		-\nu(x) & \phantom{-} \mu(x) \\
    		\phantom{-} \nu(x) & -\mu(x)
	    \end{pmatrix}
	     \begin{pmatrix}
    		u_1(t, x) \\
    		u_2(t, x)
	    \end{pmatrix}, \\
        u(0, x) &= u_0(x),
    \end{aligned} 
    \right. 
\end{equation}
subject to the boundary conditions
\begin{align*}
    u_1 \bigl(t, \tfrac c d \bigr) = 0 \quad \text{and} \quad u_2 \bigl(t, \tfrac a b \bigr)  = 0 \qquad \text{for all } x \in I \text{ and } t \geq 0.
\end{align*}
In \cite{Kurasov2018}, the authors Kurasov, Lück, Mugnolo and Wolf described a hybrid gene regulatory network model aiming to model the evolution of protein counts in cells on a continuous state space by a piecewise deterministic Markov processes, i.e., a system of coupled partial differential equations. Classically, the state space of discrete-stochastic models representing gene regulatory networks is typically high-dimensional due to the combinatorial nature of molecule counts (see \cite{Schnoerr2017} for a survey). However, under additional assumptions, exact solutions of the discrete state space models can be found (see, e.g., \cite{Jahnke2006, Laurenzi2000}). Due to their high-dimensional nature, these models are difficult to solve numerically. Hybrid models of gene regulatory networks emerged as a compromise between the accuracy of deterministic models and the computational efficiency of stochastic models (see \cite{Puchalka2004}). The model in~\eqref{eq:the-pde} follows the same approach and models gene expressions in single cells on a continuous state space. In \cite{Kurasov2021} exact solutions of~\eqref{eq:the-pde} for special choices of parameters are given and proven to converge individually to equilibrium.

Following \cite{Kurasov2018, Kurasov2021}, we discuss the long-term behaviour of solutions of \eqref{eq:the-pde} by using classical semigroup methods. In \cite[Proposition~4.3]{Kurasov2021} it was proven that the problem is well-posed and that its solution is given by a stochastic, irreducible semigroup when modelled on a suitable $L^1$-space. Moreover, in \cite[Theorem~4.6]{Kurasov2021} the authors prove the existence of equilibrium states and that the solutions to \eqref{eq:the-pde} converge strongly to equilibrium.

In this paper, we strengthen \cite[Theorem~4.6]{Kurasov2021} to operator norm rather than strong convergence. To be precise, we prove the following result.

\begin{theorem}\label{theorem:main-result}
    The solution semigroup $(S(t))_{t \geq 0}$ of~\eqref{eq:the-pde} converges to a rank-$1$-projection onto its fixed space with respect to the operator norm as $t \to \infty$.
\end{theorem}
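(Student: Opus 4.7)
The plan is to apply the uniform convergence theorem alluded to in the abstract: for a stochastic, irreducible $C_0$-semigroup on an $L^1$-space whose time evolution contains a non-trivial partial integral operator, strong convergence to a rank-$1$ projection can be upgraded to operator-norm convergence. Three hypotheses must be verified. Stochasticity and irreducibility of $(S(t))_{t\ge 0}$ are already given by \cite[Proposition~4.3]{Kurasov2021}, and the existence of a strictly positive equilibrium together with strong convergence to the associated rank-$1$ projection is supplied by \cite[Theorem~4.6]{Kurasov2021}. The entire burden is therefore to show that, for some $t_0 > 0$, the operator $S(t_0)$ dominates a non-trivial partial integral operator.

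To extract such a component, I would split the generator as $A = A_0 + B$, where $A_0$ combines the two transport terms $-\partial_x \diag(a - bx,\, c - dx)$ with the diagonal decay rates $-\diag(\nu, \mu)$, while $B$ is the bounded positive off-diagonal multiplication operator
\[
    B \begin{pmatrix} u_1 \\ u_2 \end{pmatrix}
    = \begin{pmatrix} \mu(x)\, u_2(x) \\ \nu(x)\, u_1(x) \end{pmatrix}.
\]
Let $(T(t))_{t \ge 0}$ be the positive semigroup generated by $A_0$; it acts componentwise by transport along the characteristic flows of $a - bx$ and $c - dx$, weighted by exponential survival factors. Since $B \ge 0$, the Dyson-Phillips series yields the pointwise domination
\[
    S(t) \;\ge\; \int_0^t T(t - s)\, B\, T(s) \,\ud s.
\]
On, say, the $(1,2)$-block this double composition represents transporting $u_2$ along the flow of $c - dx$, switching to component $1$ at some space-time point, and transporting the result along the flow of $a - bx$. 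A change of variables replacing the switching time $s$ by the spatial location of the switch (which is determined by $s$ via the invertible characteristic map) then converts the time integral into an honest spatial integral, producing an integral kernel $k(x, y)$ on this block.

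The main obstacle will be the bookkeeping of the two competing characteristic flows on $I = [a/b,\, c/d]$: the first drift contracts toward $a/b$ and the second toward $c/d$, so the relevant diffeomorphisms act in opposite directions, and care is required to identify a subregion of $I \times I$ on which the composed change of variables is a genuine diffeomorphism with strictly positive Jacobian. Once such a region is exhibited, continuity and positivity of $\nu$, $\mu$ and of the transport weights force the kernel $k$ to be strictly positive on a set of positive measure, so that $S(t_0)$ dominates a non-zero positive integral operator on one block of $L^1(I;\bbC^2)$. This is precisely a non-trivial partial integral component, and together with irreducibility and stochasticity the cited convergence theorem then upgrades the strong convergence from \cite[Theorem~4.6]{Kurasov2021} to operator-norm convergence, establishing Theorem \ref{theorem:main-result}.
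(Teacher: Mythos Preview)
Your outline for exhibiting a partial integral component matches the paper's: Dyson--Phillips plus a change of variables on an off-diagonal block, converting the switching time into a spatial variable via the characteristic flows. Your splitting $A_0+B$ (diagonal transport-with-decay versus the positive off-diagonal multiplication) is in fact slightly more direct than the paper's route, which first replaces $\calB$ by a constant minorant $\calE$ via Lemma~\ref{lemma:lambda-0-epsilon}, compares $S(t)$ with the auxiliary semigroup $\ue^{-\gamma t}S_\varepsilon(t)$, and only then runs Dyson--Phillips on $S_\varepsilon$. Either way one lands on the same substitution estimate for $\int_0^t T_1(b(t-s))T_2(ds)\,\ud s$ (or the symmetric block when $b\le d$).

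The substantive divergence is in the second hypothesis. You assert that the relevant theorem reads ``stochastic $+$ irreducible $+$ partial integral operator $+$ strong convergence $\Rightarrow$ operator-norm convergence'' and then import strong convergence from \cite[Theorem~4.6]{Kurasov2021}. The version actually quoted and applied here (Theorem~\ref{theorem:glueck-martin}, i.e.\ \cite[Corollary~1.2]{Glueck2022}) does not list strong convergence among its equivalent inputs: what must be verified is that for every non-zero $0\le g\in L^\infty([0,1];\bbC^2)$ there exists $\lambda>0$ with $R(\lambda,\calA+\calB)'g\gg 0$, meaning bounded below by a strictly positive constant. Irreducibility and strong convergence yield only $R(\lambda,\calA+\calB)'g>0$ almost everywhere, not the uniform lower bound. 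Accordingly, roughly half of the paper's work (Proposition~\ref{proposition:dual-resolvent-positivity-improving}, built on the explicit dual resolvent formulas of Corollary~\ref{corollary:dual-resolvent-identity} and the Neumann-series comparison of Lemma~\ref{lemma:lambda-0-epsilon}) is devoted to establishing precisely this condition, and your plan skips it entirely. Unless you can point to a formulation of \cite[Corollary~1.2]{Glueck2022} that accepts the mere existence of an invariant density as an equivalent hypothesis, this is a genuine gap rather than a shortcut.
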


In other words, the convergence of the solutions to the equilibrium cannot happen arbitrarily slow, while varying over all the initial conditions of the system. A few comments on the above result are in order:
\begin{enumerate}[label = \upshape (\alph*)]
    \item The first order differential operator occurring in~\eqref{eq:the-pde} essentially acts as a shift; in particular, it exhibits no smoothing on the solutions unlike diffusion terms. Hereby, the entire mass in the different coordinates is shifted in two opposing directions and concentrated at the opposing ends of the interval $I$. Hence, one cannot expect the solutions to converge without the action of the potential term. 

    \item The potential term is a multiplication operator and, therefore, has also no particular smoothing properties. However, the potential term causes a mixing of the coordinates resulting in these shifts to counteract each other, ensuring the convergence of solutions. This idea was the gist in \cite{Kurasov2021}.
\end{enumerate}

\subsection*{Methodology and related literature}
There exists a substantial number of results in the literature on the long-term behaviour of semigroups on $L^p$-spaces that show strong convergence to an equilibrium as $t \to \infty$ under the assumption that the semigroup contains an integral operator (see Section~\ref{section:the-long-term-behaviour} for a definition). These results have a long history, going back to \cite[Korollar~3.11]{Greiner1982}. Subsequent works provide similar results (see \cite{Arendt2008, Davies2005, Gerlach2013, GerlachDISS, Gerlach2017} for examples). For a brief survey of these results, we refer to \cite[Section~3]{Arendt2020}. Furthermore, in \cite{Gerlach2019} results of this type are obtained under the condition that the semigroup contains an operator that dominates a non-zero integral operator. In \cite{Glueck2019} a generalization of many of the aforementioned results is presented in a more abstract setting. 

Furthermore, in \cite{Pichor2000, Pichor2016, Pichor2018a, Pichor2018, Rudnicki2002} and \cite[\S 5]{Pichor2012} an asymptotic theory of substochastic $C_0$-semigroups that dominate an integral operator on an $L^1$-space is developed. These results all yield the strong convergence of semigroups to equilibrium and have a history of applications to models from mathematical biology, kinetic equations and queueing theory. In the proof of Theorem~\ref{theorem:main-result} we employ a more recent theorem from~\cite{Glueck2022} that yields the stronger operator norm convergence of stochastic semigroups (see Theorem~\ref{theorem:glueck-martin}).

\subsection*{Organization of the article}
In Section~\ref{section:setting-the-stage} we study the first-order differential operators that appear in~\eqref{eq:the-pde}. Moreover, the proof of Theorem~\ref{theorem:main-result} is prepared by simplifying~\eqref{eq:the-pde} via a state space transform. This shortens the calculations in Section~\ref{section:the-long-term-behaviour}.

In Section~\ref{section:the-long-term-behaviour} the main result is proved. This is achieved by showing the existence of a lower bound for the dual resolvent of the generator of the solution semigroup $(T(t))_{t \geq 0}$ and by proving that the solution semigroup contains a partial integral operator. This allows us to apply the aforementioned result from \cite{Glueck2022}.

\subsection*{Notation and terminology} 
Let $E$ be a Banach space. Then we denote the \emph{space of bounded linear operators} on $E$ by $\calL(E)$. Furthermore, if $A$ is a closed operator on $E$, then we denote by $\sigma(A)$ the \emph{spectrum} and by $\rho(A)$ the \emph{resolvent set} of $A$.  Let $(\Omega, \Sigma, \mu)$ be a measure space. For an element $f \in L^\infty(\Omega)$, we use the notation $f \gg0$ to say that $f$ is \emph{bounded away from $0$}, i.e., that there exists some $c > 0$ such that $f \geq c \cdot \one$, where $\one \colon \Omega \to \C, \, x \mapsto 1$. Here, the inequality $\geq$ between elements in $L^\infty(\Omega)$ is to be understood pointwise for almost all points in $\Omega$.

Furthermore, a $C_0$-semigroup $(T(t))_{t \geq 0}$ on $L^1(\Omega)$ is called \emph{positive} if $T(t) f \geq 0$ for all $0 \leq f \in L^1(\Omega)$. A positive $C_0$-semigroup $(T(t))_{t \geq 0}$ on $L^1(\Omega)$ is called \emph{stochastic} if
\begin{align*}
    \norm{T(t) f} = \norm{f} \qquad \text{for all } 0 \leq f \in L^1(\Omega). 
\end{align*}

A $\C^n$-valued function $f$ is said to satisfy the above conditions if it satisfies them componentwise.

Throughout the paper, we assume knowledge of classical semigroup theory and the basics of the theory of positive operators on $L^1$-spaces. For an introduction to these topics, we refer to \cite{Batkai2017, Engel2000, Nagel1986, Rudnicki2017} and \cite{Aliprantis2006, Aliprantis1985, MeyerNieberg1991, Schaefer1974, Zaanen1997}, respectively.

\section{Setting the stage} \label{section:setting-the-stage}

Let $a, b, c, d$ as well as the interval $I$ and the functions $\nu$ and $\mu$ be defined as in the introduction and let $\mathring{I} \coloneqq (\frac a b, \frac c d)$. Consider the operator $\calA \colon D(\calA) \to L^1(\Omega; \C^2)$ defined by
\begin{align}\label{def:differential-operator-general}
	\calA
	\begin{pmatrix}
		f_1 \\
		f_2
	\end{pmatrix}
	(x) \coloneqq -\frac{\ud}{\ud x}
	\begin{pmatrix}
		a - b x & 0 \\
		0 & c - d x
	\end{pmatrix}
	\begin{pmatrix}
		f_1 \\
		f_2
	\end{pmatrix}(x),
\end{align}
where
\begin{equation*}
    D(\calA) \coloneqq \left\{f \in L^1(I; \C^2) : \frac{\ud}{\ud x}
    \begin{pmatrix}
	(a - b x) f_1\\
	(c - d x) f_2
    \end{pmatrix}
    \in L^1(\mathring{I}; \C^2), \, f_1(\tfrac c d) = 0, \, f_2 (\tfrac a b) = 0
    \right\}.
\end{equation*}
Here $L^1(I; \C^2)$ is equipped with the norm
\begin{align*}
    \norm{f}_1 \coloneqq \norm{f_1}_1 + \norm{f_2}_1 \qquad \text{for all } f \in L^1(I; \C^2).
\end{align*}
This norm renders $L^1(I; \C^2)$ isometrically isomorphic to the $L^1$-space of scalar-valued functions over two disjoint copies of $I$. In particular, one can treat $L^1(I; \C^2)$ as a scalar-valued $L^1$-space. 

Consider the bounded operator $\calB \colon L^1(I; \C^2) \to L^1(I; \C^2)$ given by
\begin{align*}
	\calB
	\begin{pmatrix}
		f_1 \\
		f_2
	\end{pmatrix}(x) \coloneqq
	\begin{pmatrix}
		-\nu(x) & \mu(x) \\
		\nu(x) & -\mu(x)
	\end{pmatrix}
	\begin{pmatrix}
		f_1 (x) \\
		f_2 (x)
	\end{pmatrix}.
\end{align*}

Using the operators $\calA$ and $\calB$ we can restate~\eqref{eq:the-pde} as the abstract Cauchy problem
\begin{align} \label{eq:ACP} \tag{ACP}
    u'(t) = (\calA + \calB) u(t), \quad u(0) = u_0
\end{align}
on the state space $L^1(I; \C^2)$, where $D(\calA + \calB) \coloneqq D(\calA)$.

Via an affine linear state-space transform, we map the interval $I = [\frac a b, \frac c d]$ to the interval $[0,1]$. The transformed system is then of the form
\begin{equation}\label{eq:the-pde-simplified}
    \left\lbrace
    \begin{aligned}
        \frac {\ud} {\ud t} u(t,x) &= \frac {\ud} {\ud x}
        \begin{pmatrix}
            b x u_1(t,x) \\
            d (x - 1) u_2(t,x)
        \end{pmatrix}
        +
        \begin{pmatrix}
    		-\tilde\nu(x) & \tilde\mu(x) \\
    		\tilde\nu(x) & -\tilde\mu(x)
	    \end{pmatrix}
	     \begin{pmatrix}
    		u_1(t, x) \\
    		u_2(t, x)
	    \end{pmatrix}, \\
        u(0, x) &= u_0(x),
    \end{aligned} 
    \right. 
\end{equation}
subject to the boundary conditions $u_1(t,1) = 0$ and $u_2(t, 0) = 0$. Here $\tilde\nu$ and $\tilde{\mu}$ are positive and continuous functions on the interval $[0,1]$. It follows that we may choose $a = 0$ and $c = d$ without losing the generality offered by the original system~\eqref{eq:the-pde}.

In the simplified form~\eqref{eq:the-pde-simplified} two differential operators appear. Namely, the operators
\begin{align*}
    f \mapsto \frac{\ud}{\ud x}(x f) \qquad \text{and} \qquad f \mapsto \frac{\ud}{\ud x} \big((1 -x) f \big).
\end{align*}
We commence by studying these operators. It turns out that it is easiest to consider both operators on a domain in $L^1([0, \infty))$ and $L^1((- \infty, 0])$, respectively. We later restrict the operators to $L^1([0,1])$. To this end, we define
\begin{alignat*}{2}
    A_1 f &\coloneqq \frac{\ud}{\ud x}(x f), \quad && D(A_1) \coloneqq \set{f \in L^1([0, \infty)) : (x f)' \in L^1((0, \infty))}, \\
    A_2 f &\coloneqq \frac{\ud}{\ud x}((x - 1) f), \quad && D(A_2) \coloneqq \set{f \in L^1((-\infty, 1]) : ((x - 1) f)' \in L^1((-\infty, 1))},
\end{alignat*}
where the derivatives have to be understood in the distributional sense. 

These operators are closely related in the sense that both operators are similar via the \emph{reflection operator}
\begin{align*}
    Q \colon L^1([0, \infty)) \to L^1((-\infty, 1]), \quad Qf(x) \coloneqq f(1 - x).
\end{align*}
More precisely, one has the following lemma. We omit its proof as it is straightforward. 

\begin{lemma} \label{lemma:similarity-lemma-extended} \phantom{X}
\begin{enumerate}[label = \upshape (\roman*)]
    \item \label{lemma:similarity-lemma-extended-a1} The reflection operator $Q$ is an isometric isomorphism with
    \begin{align*}
        Q^{-1} \colon L^1((-\infty, 1]) \to L^1([0, \infty)), \quad Q^{-1}f(x) \coloneqq f(1 - x).
    \end{align*}
    In particular, both $Q$ and $Q^{-1}$ are positive.
    \item \label{lemma:similarity-lemma-extended-a2} The operators $A_1$ and $A_2$ are similar via $Q$, i.e., we have that $D(A_2) = Q D(A_1)$ and 
        \begin{align*}
            A_2 = Q A_1 Q^{-1}.
        \end{align*}
\end{enumerate}
\end{lemma}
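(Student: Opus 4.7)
The plan is to treat the two claims separately and to reduce everything to the change of variables $y = 1-x$.

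For part \ref{lemma:similarity-lemma-extended-a1}, the map $x \mapsto 1-x$ is a measure-preserving involution between $[0, \infty)$ and $(-\infty, 1]$. The change-of-variables formula yields at once that $Q$ is an isometry, that $g \mapsto g(1-\,\cdot\,)$ provides a two-sided inverse taking $L^1((-\infty,1])$ back to $L^1([0, \infty))$, and that both maps are positive because they act by pointwise composition with a bijection.

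For part \ref{lemma:similarity-lemma-extended-a2}, the core of the argument is to verify the intertwining relation $A_2 Qf = Q A_1 f$ for every $f \in D(A_1)$. Writing $h(y) \coloneqq y f(y)$ and $g \coloneqq Qf$, one has $(x-1) g(x) = -h(1-x)$, so a formal chain-rule computation gives
\begin{align*}
    \frac{\ud}{\ud x}\bigl((x-1)g(x)\bigr) = h'(1-x) = \bigl(Q(A_1 f)\bigr)(x).
\end{align*}
Since $Q$ is an isometric isomorphism by part \ref{lemma:similarity-lemma-extended-a1}, the right-hand side belongs to $L^1((-\infty, 1))$ exactly when $A_1 f = h' \in L^1((0, \infty))$. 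This yields both the inclusion $Q D(A_1) \subseteq D(A_2)$ and the identity $A_2 Qf = Q A_1 f$; applying the symmetric argument to $Q^{-1}$ in the role of $Q$ gives the reverse inclusion and completes the proof.

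The only delicate point is that neither $f$ nor $g$ is a priori classically differentiable, so the chain-rule step must be carried out at the level of distributional derivatives. This is routine: for a test function $\varphi \in C_c^\infty((-\infty,1))$, the pullback $\tilde\varphi(y) \coloneqq \varphi(1-y)$ lies in $C_c^\infty((0, \infty))$, and the substitution $y = 1-x$ in the duality pairing
\begin{align*}
    \Bigl\langle \tfrac{\ud}{\ud x}\bigl((x-1)g\bigr), \varphi \Bigr\rangle = -\int_{-\infty}^{1} (x-1)\,g(x)\,\varphi'(x) \,\ud x
\end{align*}
converts it into the pairing that defines $h'$ on $(0, \infty)$. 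This legitimises the above identity in $L^1_{\mathrm{loc}}$ and closes the argument.
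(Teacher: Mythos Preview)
Your proof is correct and is exactly the straightforward verification the authors have in mind; in fact the paper omits the proof entirely, stating only that it is ``straightforward''. Your treatment of the distributional chain rule via the pullback test function $\tilde\varphi(y) = \varphi(1-y)$ is the natural way to make the formal computation rigorous, and the symmetric argument with $Q^{-1}$ for the reverse domain inclusion is precisely what is needed.
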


Next, we prove that $A_1$ and $A_2$ generate stochastic semigroups on the spaces $L^1([0, \infty))$ and $L^1((-\infty, 1])$, respectively. For that purpose and for later reference, we give explicit identities for the resolvents of both operators.

\begin{lemma} \label{lemma:resolvent-formulas-extended} The following assertions hold:
	\begin{enumerate}[label = \upshape (\roman*)]
		\item \label{lemma:resolvent-formulas-extended-a1} We have $(0, \infty) \subseteq \rho(A_1)$. For all $\lambda > 0$ and $g \in L^1([0, \infty))$ the identity
		\begin{align*}
			R(\lambda, A_1) f(x) = x^{\lambda - 1} \int_x^\infty \frac{f(y)}{y^\lambda} \, \ud y
		\end{align*}
        holds for almost all $x \in [0, \infty)$. 
		\item \label{lemma:resolvent-formulas-extended-a2} We have $(0, \infty) \subseteq \rho(A_2)$. For all $\lambda > 0$ and $g \in L^\infty((-\infty, 1])$ the identity
		\begin{align*}
			R(\lambda, A_2) f(x) = (1 - x)^{\lambda - 1} \int_{-\infty}^x \frac{f(y)}{(1 - y)^\lambda} \, \ud y
		\end{align*}
        holds for almost all $x \in (-\infty,1]$. 
	\end{enumerate}
\end{lemma}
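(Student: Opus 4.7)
The plan is to establish (i) by direct verification of the given formula as a two-sided inverse of $\lambda - A_1$, and to derive (ii) from (i) by invoking the similarity relation in Lemma~\ref{lemma:similarity-lemma-extended}. For $\lambda > 0$ define the candidate operator
\begin{align*}
    R_\lambda g(x) \coloneqq x^{\lambda - 1} \int_x^\infty \frac{g(y)}{y^\lambda} \, \ud y, \quad x \in (0, \infty).
\end{align*}

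First I would show $R_\lambda \in \calL(L^1([0, \infty)))$ with $\norm{R_\lambda} \leq 1/\lambda$. This is a one-line application of Tonelli's theorem: swapping the order of integration and using $\int_0^y x^{\lambda - 1} \, \ud x = y^\lambda/\lambda$ gives $\norm{R_\lambda g}_1 \leq \lambda^{-1} \norm{g}_1$. In particular, $R_\lambda g \in L^1([0, \infty))$.

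Next I would verify that $R_\lambda g \in D(A_1)$ and $(\lambda - A_1) R_\lambda g = g$. The function $x \mapsto x R_\lambda g(x) = x^\lambda \int_x^\infty y^{-\lambda} g(y) \, \ud y$ is absolutely continuous on compact subsets of $(0, \infty)$, and by the product rule and the Lebesgue differentiation theorem its distributional derivative is
\begin{align*}
    \frac{\ud}{\ud x}(x R_\lambda g(x)) = \lambda x^{\lambda - 1} \int_x^\infty \frac{g(y)}{y^\lambda} \, \ud y - g(x) = \lambda R_\lambda g(x) - g(x),
\end{align*}
which lies in $L^1([0, \infty))$ by the preceding step. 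Hence $R_\lambda g \in D(A_1)$ and $A_1 R_\lambda g = \lambda R_\lambda g - g$, i.e., $R_\lambda$ is a right inverse of $\lambda - A_1$. For injectivity of $\lambda - A_1$, any $f \in D(A_1)$ with $(\lambda - A_1) f = 0$ satisfies $(x f(x))' = \lambda f(x)$ in the distributional sense; solving this ODE yields $f(x) = C x^{\lambda - 1}$ almost everywhere on $(0, \infty)$. Since $x^{\lambda - 1}$ is not integrable at $\infty$ for $\lambda > 0$, the constant $C$ must vanish. Together with the surjectivity established above, this gives $(0, \infty) \subseteq \rho(A_1)$ and $R(\lambda, A_1) = R_\lambda$.

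Part (ii) then follows from Lemma~\ref{lemma:similarity-lemma-extended}~\ref{lemma:similarity-lemma-extended-a2}: the similarity $A_2 = Q A_1 Q^{-1}$ yields $(0, \infty) \subseteq \rho(A_2)$ with $R(\lambda, A_2) = Q R(\lambda, A_1) Q^{-1}$. Applying the formula from (i) to $Q^{-1} g(y) = g(1 - y)$ and then performing the substitution $z = 1 - y$ inside the integral produces precisely the claimed identity for $R(\lambda, A_2) g$. I do not expect any serious obstacle here; the only step requiring mild care is the justification of the distributional derivative of $x R_\lambda g(x)$ for arbitrary $g \in L^1$, which is handled by approximation of $g$ by continuous compactly supported functions and the boundedness estimate already proved.
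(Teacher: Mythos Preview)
Your proposal is correct and follows essentially the same approach as the paper: define the candidate $R_\lambda$, prove the $L^1$-bound $\norm{R_\lambda} \le 1/\lambda$ via Fubini/Tonelli, verify that it inverts $\lambda - A_1$, and then obtain (ii) from (i) via the similarity $R(\lambda, A_2) = Q R(\lambda, A_1) Q^{-1}$ and a substitution. The only cosmetic difference is that the paper checks both $(\lambda - A_1) R_\lambda = \id$ and $R_\lambda(\lambda - A_1) = \id$ directly, while you prove the right-inverse property and then argue injectivity by solving the kernel ODE $f = C x^{\lambda-1}$; both routes are equally valid and equally short.
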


\begin{proof}
	(i): For $\lambda > 0$ we define the operator
	\begin{align*}
		R_\lambda \colon L^1([0, \infty)) \to L^1([0, \infty)), \quad (R_\lambda f)(x) \coloneqq x^{\lambda - 1} \int_x^\infty \frac{f(y)}{y^\lambda} \, \ud y.
	\end{align*}
    We show that $R_\lambda$ is well-defined, i.e., that $R_\lambda f \in L^1([0, \infty))$ for all $f \in L^1([0, \infty))$. Fix $f \in L^1([0, \infty))$. Clearly, $R_\lambda f$ is measurable for each $\lambda  > 0$ and by Fubini's theorem one has
	\begin{align*}
		\norm{R_\lambda f}_1 &= \int_0^\infty \abs{R_\lambda f(x)} \, \ud x \leq \int_0^\infty \int_x^\infty x^{\lambda - 1} \frac{\abs{f(y)}}{y^\lambda} \, \ud y \, \ud x \\ 
	&= \int_0^\infty \frac{\abs{f(y)}}{y^\lambda} \int_0^y x^{\lambda - 1} \, \ud x \, \ud y = \int_0^\infty \frac{\abs{f(y)}}{\lambda} \, \ud y = \frac{1}{\lambda} \norm{f}_1
	\end{align*}
and thus $R_\lambda f \in L^1([0, \infty))$. Furthermore, an elementary calculation shows that for all $\varphi \in \mathrm{C}_{\mathrm{c}}^\infty((0, \infty))$ we have
\begin{align*}
    \langle \varphi', x \cdot R_\lambda f \rangle = - \langle \varphi, \lambda R_\lambda f - f \rangle,
\end{align*}
which implies that
\begin{align*}
    \frac{\ud}{\ud x}(x \cdot R_\lambda f) = \lambda R_\lambda f - f \in L^1((0, \infty)),
\end{align*}
and thus, $R_\lambda f \in D(A_1)$ for all $\lambda > 0$. Furthermore, another standard calculation shows that
\begin{align*}
    (\lambda - A_1) R_\lambda f = R_\lambda (\lambda - A_1) f = f \qquad \text{for all } f \in D(A_1).
\end{align*}
Hence, the claim follows. 

(ii): Since $A_1$ and $A_2$ are similar by Lemma~\ref{lemma:similarity-lemma-extended}, it follows that $\sigma(A_1) = \sigma(A_2)$, and thus $(0, \infty) \subseteq \rho(A_2)$. Also by similarity, $R(\lambda, A_2) = Q R(\lambda, A_1) Q^{-1}$ (see e.g.\ \cite[Section~II.2.1]{Engel2000}) and thus
\begin{align*}
	R(\lambda, A_2) f(x) = (1 - x)^{\lambda - 1} \int_{1 - x}^\infty \frac{f(1 - y)}{y^\lambda} \, \ud y = (1 - x)^{\lambda - 1} \int_{-\infty}^x \frac{f(y)}{(1 - y)^\lambda} \, \ud y
\end{align*}
for all $\lambda > 0$ and $f \in L^1([0, \infty))$. 
\end{proof}

Now we are in a position to show that $A_1$ and $A_2$ generate stochastic semigroups on the spaces $L^1([0, \infty))$ and $L^1((-\infty, 1])$, respectively.

\begin{proposition} \label{proposition:generated-semigroups-extended}
The following assertions hold:
\begin{enumerate}[label = \upshape (\roman*)]
    \item \label{proposition:generated-semigroups-extended-a1} The operator $A_1$ is the generator of a stochastic $C_0$-semigroup $(\widetilde T_1(t))_{t \geq 0}$ on $L^1([0, \infty))$, given by
    \begin{align*}
        \widetilde T_1(t) f(x) = \ue^t f(x \ue^t)
    \end{align*}
    for all $f \in L^1([0, \infty))$.
    \item \label{proposition:generated-semigroups-extended-a2} The operator $A_2$ is the generator of a stochastic $C_0$-semigroup $(\widetilde T_2(t))_{t \geq 0}$ on $L^1((-\infty, 1])$, given by
    \begin{align*}
        \widetilde T_2(t) f(x) = \ue^t f(1 - (1 - x) \ue^t)
    \end{align*}
    for all $f \in L^1((-\infty, 1])$.
\end{enumerate}
\end{proposition}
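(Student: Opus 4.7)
The plan is to verify directly that the explicit formula $\widetilde T_1(t)f(x) = \ue^t f(x\ue^t)$ defines a stochastic $C_0$-semigroup on $L^1([0,\infty))$ and then to identify its generator as $A_1$ via the resolvent formula from Lemma~\ref{lemma:resolvent-formulas-extended}. The analogous statement for $A_2$ will then follow immediately by similarity via the reflection operator $Q$.

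For the first step, the semigroup law $\widetilde T_1(t+s) = \widetilde T_1(t)\widetilde T_1(s)$ is a one-line computation straight from the definition, and the substitution $y = x\ue^t$ gives $\norm{\widetilde T_1(t)f}_1 = \norm{f}_1$ for every $f \in L^1([0,\infty))$. In particular the operators preserve positivity and preserve the $L^1$-norm on the positive cone, so the semigroup is stochastic. Strong continuity at $t=0$ I would first establish on the dense subspace of continuous functions with compact support in $[0,\infty)$ by dominated convergence, and then extend to all of $L^1([0,\infty))$ using the uniform bound $\norm{\widetilde T_1(t)} \leq 1$ together with a standard $3\varepsilon$-argument.

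To identify the generator, let $\widetilde A_1$ denote the generator of $(\widetilde T_1(t))_{t\geq 0}$. Since the semigroup is contractive, $(0,\infty) \subseteq \rho(\widetilde A_1)$ and $R(\lambda,\widetilde A_1)f$ is given by the Laplace transform $\int_0^\infty \ue^{-\lambda t}\widetilde T_1(t)f\,\ud t$. Substituting $y = x\ue^t$ transforms this integral into exactly the expression $x^{\lambda-1}\int_x^\infty y^{-\lambda} f(y)\,\ud y$, which by Lemma~\ref{lemma:resolvent-formulas-extended}\ref{lemma:resolvent-formulas-extended-a1} coincides with $R(\lambda,A_1)f$. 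Since the two resolvents agree at some $\lambda > 0$, the operators $\widetilde A_1$ and $A_1$ must coincide as closed operators, proving part~\ref{proposition:generated-semigroups-extended-a1}.

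For part~\ref{proposition:generated-semigroups-extended-a2}, Lemma~\ref{lemma:similarity-lemma-extended}\ref{lemma:similarity-lemma-extended-a2} asserts that $A_1$ and $A_2$ are similar via the positive isometric isomorphism $Q$. A standard similarity argument then shows that $A_2$ generates the semigroup $\widetilde T_2(t) \coloneqq Q \widetilde T_1(t) Q^{-1}$, which inherits stochasticity since $Q$ is a positive $L^1$-isometry; a direct computation yields the claimed explicit formula $\widetilde T_2(t)f(x) = \ue^t f(1 - (1-x)\ue^t)$. I do not anticipate any genuine obstacle here; the only mildly delicate point is the strong continuity at $t=0$ in part~\ref{proposition:generated-semigroups-extended-a1}, where one must control the growing multiplier $\ue^t$, but the density argument sketched above bypasses this cleanly.
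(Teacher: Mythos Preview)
Your proposal is correct and follows essentially the same route as the paper: verify the semigroup law, strong continuity and stochasticity of the explicit formula, then identify the generator by computing the Laplace transform of the orbit via the substitution $y = x\ue^t$ and matching it against the resolvent formula of Lemma~\ref{lemma:resolvent-formulas-extended}\ref{lemma:resolvent-formulas-extended-a1}; part~\ref{proposition:generated-semigroups-extended-a2} is then obtained by conjugation with $Q$. The only cosmetic difference is that the paper first restricts the Laplace-transform computation to $f \in D(A_1)$, invoking the Sobolev embedding to make the integrand continuous so that the pointwise substitution is unambiguous, and then extends by density, whereas you treat general $f$ directly; either way works.
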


\begin{proof}
    (i): It is elementary to show that $(\widetilde T_1(t))_{t \geq 0}$ satisfies the semigroup law and that it is strongly continuous. So, it is just left to show that $A_1$ is indeed the generator of $(\widetilde T_1(t))_{t \geq 0}$. To this end, let us denote the generator of $(\widetilde T_1(t))_{t \geq 0}$ by $A$. Let $f \in D(A_1)$. Then $\widetilde T_1(t) f \in D(A_1)$ and the classical Sobolev embedding implies that $T_1(t) f$ is continuous. In particular, we have
    \begin{align*}
        R(\lambda, A) f(x) &= \int_0^\infty \ue^{-\lambda t} \widetilde T_1(t) f(x) \, \ud t = \int_0^{\infty} \ue^{-\lambda t} \ue^t f(x \ue^t) \, \ud t \\
        &= x^{\lambda - 1} \int_x^\infty \frac{f(y)}{y^\lambda} \, \ud y = R(\lambda, A_1) f(x)
    \end{align*}
    by substitution for all $\lambda > 0$ and $x \in [0, \infty)$. Since $R(\lambda, A)$ and $R(\lambda, A_1)$ coincide on $D(A)$ and $D(A)$ is dense in $L^1([0, \infty))$, it follows that $A = A_1$.

     Finally, by using the same substitution as above, one obtains
     \begin{align*}
         \norm{\widetilde T_1(t) f}_1 = \norm{f}_1 \qquad \text{for all } 0 \leq f \in D(A_1),
     \end{align*}
    which implies by density of $D(A_1)$ that $(\widetilde T_1(t))_{t \geq 0}$ is a stochastic semigroup.

    (ii): The claim that $A_2$ generates a stochastic semigroup follows immediately from Lemma~\ref{lemma:similarity-lemma-extended}. Furthermore, Lemma~\ref{lemma:similarity-lemma-extended} implies
    \begin{align*}
        \widetilde T_2(t) f(x) = Q \widetilde T_1(t) Q^{-1} f(x) = \ue^t f(1 - (1 - x) \ue^t)
    \end{align*}
    for all $f \in L^1((-\infty, 1])$. 
\end{proof}

From the explicit formulas of the resolvents of $A_1$ and $A_2$ in Lemma~\ref{lemma:resolvent-formulas-extended} we now derive formulas for the resolvents of the dual operators. 

\begin{corollary} \label{corollary:dual-resolvent-identity-extended}
The following assertions hold:
	\begin{enumerate}[label = \upshape (\roman*)]
		\item \label{corollary:dual-resolvent-identity-extended-a1} We have $(0, \infty) \subseteq \rho(A_1')$ and for all $\lambda > 0$ and all $g \in L^\infty([0, \infty))$ that
		\begin{align*}
			R(\lambda, A_1') g(y) = \frac{1}{y^\lambda} \int_0^y g(x) x^{\lambda - 1} \, \ud x
		\end{align*}
        for almost all $y \in [0,\infty)$. 
		\item \label{corollary:dual-resolvent-identity-extended-a2} We have $(0, \infty) \subseteq \rho(A_2')$ and for all $\lambda > 0$ and all $g \in L^\infty((-\infty, 1])$ that 
		\begin{align*}
			R(\lambda, A_2') g(y) = \frac{1}{(1 - y)^\lambda} \int_y^1 g(x) (1 - x)^{\lambda - 1} \, \ud x
		\end{align*}
        for almost all $y \in (-\infty,1]$. 
	\end{enumerate}
\end{corollary}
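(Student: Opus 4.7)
The plan is to deduce both identities directly from Lemma~\ref{lemma:resolvent-formulas-extended} via duality. The standard functional–analytic fact that, for a densely defined closed operator $A$ on a Banach space, $\sigma(A) = \sigma(A')$ and $R(\lambda, A') = R(\lambda, A)'$ whenever $\lambda \in \rho(A)$ (see e.g.\ \cite[Proposition~B.10]{Engel2000}) immediately yields $(0,\infty) \subseteq \rho(A_1') \cap \rho(A_2')$. It therefore suffices to identify the Banach space adjoint of the integral operator on the right-hand side of Lemma~\ref{lemma:resolvent-formulas-extended}.

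For part~\ref{corollary:dual-resolvent-identity-extended-a1}, I would fix $\lambda > 0$, $f \in L^1([0,\infty))$, $g \in L^\infty([0,\infty))$ and compute
\begin{align*}
    \dual*{R(\lambda,A_1)f,\, g}
    &= \int_0^\infty g(x)\, x^{\lambda - 1} \int_x^\infty \frac{f(y)}{y^\lambda} \, \ud y \, \ud x \\
    &= \int_0^\infty \frac{f(y)}{y^\lambda} \int_0^y g(x) x^{\lambda - 1} \, \ud x \, \ud y,
\end{align*}
where the interchange of the order of integration is justified by Tonelli's theorem applied to $|f|$ and $|g|$ (the inner estimate from the proof of Lemma~\ref{lemma:resolvent-formulas-extended} already yields integrability). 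Reading off the kernel in the second line gives the claimed formula for $R(\lambda, A_1')g(y)$.

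For part~\ref{corollary:dual-resolvent-identity-extended-a2}, I see two equally short routes. One can simply repeat the Fubini computation starting from the formula in Lemma~\ref{lemma:resolvent-formulas-extended}\ref{lemma:resolvent-formulas-extended-a2}. Alternatively, one exploits the similarity from Lemma~\ref{lemma:similarity-lemma-extended}: since $A_2 = Q A_1 Q^{-1}$, taking adjoints yields $R(\lambda, A_2') = (Q^{-1})' R(\lambda, A_1') Q'$, and the dual reflection operator $Q'$ on $L^\infty$ is again given by $g \mapsto g(1 - \argument)$. Applying this transformation to the formula from part~\ref{corollary:dual-resolvent-identity-extended-a1} (with the substitution $x \mapsto 1-x$) produces precisely the stated identity for $R(\lambda, A_2')g$.

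There is no substantial obstacle here; the only point requiring any care is the application of Fubini's theorem, which is handled exactly as in the proof of Lemma~\ref{lemma:resolvent-formulas-extended} by noting that $|g| \leq \|g\|_\infty \cdot \one$ so that the double integral is dominated by $\tfrac{1}{\lambda}\|f\|_1 \|g\|_\infty$.
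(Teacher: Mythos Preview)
Your proposal is correct and follows essentially the same route as the paper: for (i) the Fubini computation identifying $R(\lambda,A_1)'$ is identical, and for (ii) the paper also uses the similarity $R(\lambda,A_2') = (Q')^{-1} R(\lambda,A_1') Q'$ (your $(Q^{-1})'$ equals $(Q')^{-1}$) rather than redoing Fubini. The only cosmetic difference is that the paper first checks directly that the candidate operator $R_\lambda'$ is bounded on $L^\infty$ before pairing, whereas you invoke the abstract identity $R(\lambda,A')=R(\lambda,A)'$ upfront.
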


\begin{proof}
	(i): For $\lambda > 0$ we define the operator
	\begin{align*}
		R_\lambda' \colon L^\infty([0, \infty)) \to L^\infty([0, \infty)), \quad (R_\lambda' f)(y) \coloneqq \frac{1}{y^\lambda} \int_0^y f(x) x^{\lambda - 1} \, \ud x.
	\end{align*}
    Then $R_\lambda'$ is well-defined with $\norm{R_\lambda'} = \frac 1 \lambda$. Now let $f \in L^1([0, \infty))$ and $g \in L^\infty([0, \infty))$. Then and Fubini's theorem yields
        \begin{align*}
	       \langle R(\lambda, A_1) f, g \rangle &= \int_0^\infty \int_x^\infty x^{\lambda - 1} \frac{f(y)}{y^\lambda}  g(x) \, \ud y \, \ud x \\
	       &= \int_0^\infty f(y) \frac{1}{y^\lambda} \int_0^y  g(x) x^{\lambda - 1} \, \ud x \, \ud y = \langle f, R_\lambda' g \rangle
        \end{align*}
    and, thus, $R(\lambda, A_1') = R(\lambda, A_1)' = R_\lambda '$ for all $\lambda > 0$.
    
	(ii): Since $A_1$ and $A_2$ are similar via $Q$, we have $R(\lambda, A_2') = Q'^{-1} R(\lambda, A_1') Q'$ for all $\lambda > 0$, where
    \begin{align*}
        Q' \colon L^\infty((-\infty, 1]) \to L^\infty([0, \infty)), \quad Q'g(x) \coloneqq g(1 - x)
    \end{align*}
    denotes the dual of the reflection operator $Q$. Thus, a simple calculation yields the claim. 
\end{proof}

Since the Theorem~\ref{theorem:main-result} is concerned with semigroups on the space $L^1([0, 1])$, one needs to restrict the semigroups $\tilde{T}_1$ and $\tilde{T}_2$ generated by $A_1$ and $A_2$, respectively, to this smaller space in a meaningful way. This is what is done next.

In order to study these restrictions of the semigroups to $L^1([0, 1])$, we consider the following obvious candidates for their generators. We define the operators $C_1$ and $C_2$ by
\begin{alignat*}{2}
    C_1 &\coloneqq \frac{\ud}{\ud x}(x f), \ \ && D(C_1) \coloneqq \set{f \in L^1([0, 1]) : (x f)' \in L^1((0, 1)), \, f(1) = 0}, \\
    C_2 &\coloneqq \frac{\ud}{\ud x}((x-1) f), \ \ && D(C_2) \coloneqq \set{f \in L^1([0, 1]) : ((x - 1) f)' \in L^1((0, 1)), \, f(0) = 0}.
\end{alignat*}

The following proposition collects some identities related to the operators $C_1$ and $C_2$. These are used later on to obtain lower bounds for the resolvent of the generator of the semigroup $(T(t))_{t \geq 0}$.

\begin{proposition} \label{proposition:generated-semigroups}
The following assertions hold:
\begin{enumerate}[label = \upshape (\roman*)]
    \item \label{proposition:generated-semigroups-a1} The operator $C_1$ is the generator of a stochastic $C_0$-semigroup $(T_1(t))_{t \geq 0}$ on $L^1([0, 1])$, given by
    \begin{align*}
        T_1(t) f(x) = 
        \begin{cases}
            \ue^t f(x \ue^t), \quad &\textup{if } t \leq - \ln x, \\
            0, \quad &\textup{else},
        \end{cases}
    \end{align*}
    for all $f \in L^1([0, 1])$. Moreover, for all $\lambda > 0$ and $f \in L^1([0, 1])$ the identity
    \begin{align*}
        R(\lambda, C_1) f(x) = x^{\lambda - 1} \int_x^1 \frac{f(y)}{y^\lambda} \, \ud y
    \end{align*}
    holds for almost all $x \in [0,1]$. 
    \item \label{proposition:generated-semigroups-a2} The operator $C_2$ is the generator of a stochastic $C_0$-semigroup $(T_2(t))_{t \geq 0}$ on $L^1([0, 1])$, given by
    \begin{align*}
        T_2(t) f(x) = 
        \begin{cases}
            \ue^t f(1 - (1 - x) \ue^t), \quad &\textup{if } t \leq - \ln (1 - x), \\
            0, \quad &\textup{else},
        \end{cases}
    \end{align*}
    for all $f \in L^1([0, 1])$. Moreover, for all $\lambda > 0$ and $f \in L^1([0, 1])$ the identity
    \begin{align*}
        R(\lambda, C_2) f(x) = (1 - x)^{\lambda - 1} \int_0^x \frac{f(y)}{(1 - y)^\lambda} \, \ud y
    \end{align*}
    holds for almost all $x \in [0,1]$.
\end{enumerate}   
\end{proposition}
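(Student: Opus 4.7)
The plan is to derive both parts of the proposition from Proposition~\ref{proposition:generated-semigroups-extended} by restricting the semigroups $(\widetilde T_1(t))_{t\geq 0}$ and $(\widetilde T_2(t))_{t\geq 0}$ to the subspace of functions supported in $[0,1]$. Concretely, for part~\ref{proposition:generated-semigroups-a1} I introduce the extension-by-zero operator $J\colon L^1([0,1])\to L^1([0,\infty))$ and the restriction $P\colon L^1([0,\infty))\to L^1([0,1])$, and set $T_1(t)\coloneqq P\widetilde T_1(t)J$. The key observation is that for $f\in L^1([0,1])$ the function $\widetilde T_1(t)Jf(x)=\ue^t(Jf)(x\ue^t)$ vanishes whenever $x\ue^t>1$, so $\widetilde T_1(t)Jf$ is supported in $[0,\ue^{-t}]\subseteq[0,1]$. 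Consequently $JP\widetilde T_1(t)J=\widetilde T_1(t)J$, which forces the semigroup law $T_1(s)T_1(t)=T_1(s+t)$; strong continuity follows from that of $(\widetilde T_1(t))_{t\geq 0}$, and the displayed explicit formula is immediate. Stochasticity is obtained by the substitution $y=x\ue^t$: for $0\leq f\in L^1([0,1])$ it gives $\norm{T_1(t)f}_1=\int_0^{\ue^{-t}}\ue^t f(x\ue^t)\,\ud x=\norm{f}_1$.

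To identify the generator, I would apply the substitution $y=x\ue^t$ to $\int_0^\infty\ue^{-\lambda t}T_1(t)f(x)\,\ud t$ to obtain the candidate resolvent
\begin{align*}
R_\lambda f(x) = x^{\lambda-1}\int_x^1 \frac{f(y)}{y^\lambda}\,\ud y
\end{align*}
for $\lambda>0$, exactly paralleling the proof of Lemma~\ref{lemma:resolvent-formulas-extended}. I would then show by the same elementary computation as in that proof that $\frac{\ud}{\ud x}(xR_\lambda f)=\lambda R_\lambda f-f$ lies in $L^1((0,1))$. Since this implies $xR_\lambda f\in W^{1,1}([0,1])\hookrightarrow C([0,1])$ and since $\int_x^1 f(y)y^{-\lambda}\,\ud y\to 0$ as $x\to 1$, the continuous representative of $R_\lambda f$ satisfies $R_\lambda f(1)=0$, so $R_\lambda$ maps $L^1([0,1])$ into $D(C_1)$. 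A direct calculation then yields $(\lambda-C_1)R_\lambda=I$ on $L^1([0,1])$ and $R_\lambda(\lambda-C_1)=I$ on $D(C_1)$, whence $\lambda\in\rho(C_1)$ with $R(\lambda,C_1)=R_\lambda$; since the same formula represents the resolvent of the generator of $(T_1(t))_{t\geq 0}$, the two operators coincide.

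Part~\ref{proposition:generated-semigroups-a2} is then obtained either by repeating the preceding steps on $L^1((-\infty,1])$ with $\widetilde T_2$ in place of $\widetilde T_1$, or more economically by transferring part~\ref{proposition:generated-semigroups-a1} through the reflection $\widetilde Q\colon L^1([0,1])\to L^1([0,1])$, $\widetilde Qf(x)\coloneqq f(1-x)$, which establishes a similarity $C_2=\widetilde Q C_1\widetilde Q^{-1}$ and hence $T_2(t)=\widetilde Q T_1(t)\widetilde Q^{-1}$, directly yielding the desired formulas. The main obstacle I anticipate is the verification that $R_\lambda f\in D(C_1)$: the boundary condition $f(1)=0$ is not pointwise meaningful in $L^1$ and must be justified via the Sobolev embedding applied to $xR_\lambda f$ together with the vanishing of the integral defining $R_\lambda f$ at $x=1$. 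Once this point is in place, the remainder reduces to the substitutions and resolvent manipulations already carried out in the preceding lemmas.
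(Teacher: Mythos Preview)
Your proposal is correct and follows essentially the same route as the paper: both restrict the half-line semigroup $(\widetilde T_1(t))_{t\geq 0}$ to the invariant subspace of functions supported in $[0,1]$, and both handle part~\ref{proposition:generated-semigroups-a2} via the reflection $\widetilde Q$. The only difference is in packaging: the paper invokes the abstract subspace-semigroup machinery from \cite[Section~II.2.3]{Engel2000} (the part $A_{1\rvert}$ of $A_1$ in the invariant subspace $F$ automatically generates the restricted semigroup, and one checks $A_{1\rvert}\cong C_1$), whereas you carry this out by hand with explicit maps $J$, $P$ and a direct resolvent computation---your anticipated verification of the boundary condition $R_\lambda f(1)=0$ is exactly the content of the isomorphism $D(A_{1\rvert})\cong D(C_1)$ that the paper leaves implicit.
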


\begin{proof}
(i): Consider the closed, $(\widetilde T_1(t))_{t \geq 0}$-invariant subspace
\begin{align*}
    F \coloneqq \set{f \in L^1([0, \infty)) : f(x) = 0 \text{ for almost all } x \geq 1}.
\end{align*}
Then the part $A_{1 \rvert}$ of $A_1$ in $F$ is given by
\begin{align*}
    A_{1 \rvert} f & = A_1 f = \frac{\ud}{\ud x}(xf),\\
    D(A_{1 \rvert})  &= \set{f \in D(A_1) \cap F : A_1 f \in F} \\
    & = \{ f \in L^1([0,\infty)) : (x f)' \in L^1([0,\infty)), \, f(x) = 0 \text{ for all } x \geq 1\}.
\end{align*}
and generates the subspace semigroup $(\widetilde T_1(t)_\rvert)_{t \geq 0}$ on $F$ (see \cite[Section~II.2.3]{Engel2000}). Clearly, $F$ is isomorphic $L^1([0, 1])$ and the generator of the subspace semigroup $(\widetilde T_1(t)_\rvert)_{t \geq 0}$ is isomorphic to $C_1$. The above resolvent identity is another direct consequence of this isomorphy. 

(ii): This follows either from the fact that $(T_2(t))_{t \geq 0}$ is similar to $(T_1(t))_{t \geq 0}$ via the restricted reflection operator
\begin{align*}
    Q \colon L^1([0, 1]) \to L^1([0, 1]), \quad Qf(x) = f(1 - x)
\end{align*}
which is an isometric isomorphism with $Q^{-1} = Q$ or, alternatively, by analogous arguments as used in (i).
\end{proof}

Analogously to Corollary~\ref{corollary:dual-resolvent-identity-extended}, the following result is a consequence of Proposition~\ref{proposition:generated-semigroups}.

\begin{corollary} \label{corollary:dual-resolvent-identity}
The following assertions hold:
	\begin{enumerate}[label = \upshape (\roman*)]
		\item \label{corollary:dual-resolvent-identity-a1} We have $(0, \infty) \subseteq \rho(C_1')$ and for all $\lambda > 0$ and all $g \in L^\infty([0,1])$ that
		\begin{align*}
			R(\lambda, C_1') g(y) = \frac{1}{y^\lambda} \int_0^y g(x) x^{\lambda - 1} \, \ud x
		\end{align*}
        for almost all $y \in [0,\infty)$.
		\item \label{corollary:dual-resolvent-identity-a2} We have $(0, \infty) \subseteq \rho(C_2')$ and for all $\lambda > 0$ and all $g \in L^\infty([0,1])$ that
		\begin{align*}
			R(\lambda, C_2') g(y) = \frac{1}{(1 - y)^\lambda} \int_y^1 g(x) (1 - x)^{\lambda - 1} \, \ud x
		\end{align*}
        for almost all $y \in [0,\infty)$. 
	\end{enumerate}
\end{corollary}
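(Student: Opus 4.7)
The plan is to mimic the argument given in the proof of Corollary~\ref{corollary:dual-resolvent-identity-extended}. For part~(i), I would first introduce the candidate operator
\[
R_\lambda' \colon L^\infty([0,1]) \to L^\infty([0,1]), \quad (R_\lambda' g)(y) \coloneqq \frac{1}{y^\lambda} \int_0^y g(x) x^{\lambda - 1} \, \ud x,
\]
and verify that it is well-defined and bounded with $\norm{R_\lambda'} \leq 1/\lambda$, which is immediate from the pointwise estimate $\int_0^y x^{\lambda-1} \, \ud x = y^\lambda/\lambda$. Using the explicit resolvent formula for $C_1$ from Proposition~\ref{proposition:generated-semigroups}(i) together with Fubini's theorem, I would then check the duality
\[
\langle R(\lambda, C_1) f, g \rangle = \int_0^1 g(x) x^{\lambda-1} \int_x^1 \frac{f(y)}{y^\lambda} \, \ud y \, \ud x = \int_0^1 f(y) \frac{1}{y^\lambda} \int_0^y g(x) x^{\lambda-1} \, \ud x \, \ud y = \langle f, R_\lambda' g \rangle
\]
for all $f \in L^1([0,1])$ and $g \in L^\infty([0,1])$. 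Combined with $(0,\infty) \subseteq \rho(C_1)$ from Proposition~\ref{proposition:generated-semigroups}, this both yields $(0,\infty) \subseteq \rho(C_1')$ and identifies $R(\lambda, C_1') = R(\lambda, C_1)' = R_\lambda'$, which is the claimed formula.

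For part~(ii) there are two natural routes. One is to run the Fubini calculation verbatim with the resolvent identity from Proposition~\ref{proposition:generated-semigroups}(ii). A more economical alternative is to invoke the similarity $C_2 = Q C_1 Q^{-1}$ via the self-inverse restricted reflection $Q \colon L^1([0,1]) \to L^1([0,1])$, $Qf(x) = f(1-x)$, introduced in the proof of Proposition~\ref{proposition:generated-semigroups}(ii). Dualizing gives $R(\lambda, C_2') = Q' R(\lambda, C_1') Q'$ on $L^\infty([0,1])$, with $Q'g(x) = g(1-x)$, and plugging in the formula from part~(i) followed by the substitution $u = 1 - x$ inside the remaining integral reproduces the target expression.

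No step should present a real obstacle. The Fubini application reduces to absolute integrability of the kernel $(x,y) \mapsto g(x) x^{\lambda-1} f(y) y^{-\lambda}$ on the triangle $\set{(x,y) : 0 \leq x \leq y \leq 1}$, and this is exactly what is controlled by the $L^1$-bound already established for $R(\lambda, C_1)$ in the proof of Lemma~\ref{lemma:resolvent-formulas-extended}. The only bookkeeping to watch is the standard identification $R(\lambda, C_i') = R(\lambda, C_i)'$ for $\lambda \in \rho(C_i)$.
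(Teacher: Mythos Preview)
Your proposal is correct and is precisely the argument the paper has in mind: the paper only states that the result follows ``analogously to Corollary~\ref{corollary:dual-resolvent-identity-extended}'' as a consequence of Proposition~\ref{proposition:generated-semigroups}, and you have faithfully spelled out that analogous Fubini/duality computation and the similarity argument via the restricted reflection $Q$.
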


\section{Long-term behaviour} \label{section:the-long-term-behaviour}

In this section, we prove our main result (Theorem~\ref{theorem:main-result}). Namely, we show that the solutions to \eqref{eq:the-pde} converge uniformly to the equilibrium of the equation with respect to the operator norm. In Section~\ref{section:setting-the-stage} we observed that this is equivalent to studying the long-term behaviour of the solutions to the abstract Cauchy-problem
\begin{align} \tag{ACP}
    u'(t) = (\calA + \calB) u(t), \quad u(0) = u_0,
\end{align}
in the space $L^1([0, 1]; \C^2)$. Here $\calA$ and $\calB $ are given as at the beginning of Section~\ref{section:setting-the-stage}. Recall that by the discussion in the introduction of Section~\ref{section:setting-the-stage} we may, and shall, assume that $a = 0$ and $c = d$. It was already shown in \cite[Proposition~4.3]{Kurasov2021} that the operator $\calA + \calB$ generates a stochastic, irreducible semigroup $(T(t))_{t \geq 0}$ on $L^1([0, 1])$.

We first recall the notion of integral operators. Let $(\Omega, \Sigma, \mu)$ be a $\sigma$-finite measure space. Then a bounded operator $T$ on $L^1(\Omega)$ is called an \emph{integral operator} if there exists a measurable function $k \colon \Omega \times \Omega \to \R$ such that for each $f \in L^1(\Omega)$ the function $y \mapsto k(x, y) f(y)$ is in $L^1(\Omega)$ for almost every $x \in \Omega$ and
\begin{align*}
    T f = \int_\Omega k(\argument, y) f(y) \, \ud y \quad \text{for all } f \in L^1(\Omega).
\end{align*}
Furthermore, an operator $T$ on $L^1(\Omega)$ is called a \emph{partial integral operator} if there exists a non-zero integral operator $K \in \calL(L^1(\Omega))$ such that $0 \leq K \leq T$. The following result (see \cite[Corollary~1.2]{Glueck2022}) is our main tool for the proof of Theorem~\ref{theorem:main-result}.

\begin{theorem}\label{theorem:glueck-martin}
    Let $(\Omega, \Sigma, \mu)$ be a $\sigma$-finite measure space and let ${(T(t))}_{t \geq 0}$ be a stochastic and irreducible $C_0$-semigroup on $L^1(\Omega)$. Suppose that there exists $t_0 > 0$ such that $T(t_0)$ is a partial integral operator. Then the following statements are equivalent:
    \begin{enumerate}[label = \upshape (\roman*)]
        \item ${(T(t))}_{t \geq 0}$ converges with respect to the operator norm as $t \to \infty$.
        \item\label{item:prop:glueck_martin-property_pos_improv} For each non-zero $0 \leq g \in L^\infty(\Omega)$ there exists $\lambda > 0$ such that $R(\lambda, A)' g \gg 0$.
    \end{enumerate}
\end{theorem}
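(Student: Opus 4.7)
I would split the equivalence into its two directions and treat them separately, since the forward direction is essentially structural while the converse is the substantive content.

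For (i)$\Rightarrow$(ii), I would identify the limit and push it to the adjoint. By stochasticity and irreducibility, the fixed space of $(T(t))_{t \geq 0}$ is one-dimensional, spanned by a strictly positive density $h \gg 0$ with $\norm{h}_1 = 1$. Any operator-norm limit $P$ must commute with every $T(s)$, be a projection, and have its range in the fixed space; hence $P = h \otimes \one$ is the rank-one projection $f \mapsto \langle \one, f \rangle h$. Operator-norm convergence transfers to adjoints, so $T(t)' g \to \langle h, g \rangle \one$ in $L^\infty(\Omega)$. For any non-zero $0 \leq g \in L^\infty(\Omega)$ we have $\langle h, g \rangle > 0$ since $h \gg 0$, and hence $T(t)' g \geq c \one$ for all $t$ sufficiently large and some $c > 0$. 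Combined with positivity of the adjoint semigroup, the resolvent identity
\begin{equation*}
    R(\lambda, A)' g = \int_0^\infty \ue^{-\lambda t} T(t)' g \, \ud t
\end{equation*}
then yields $R(\lambda, A)' g \gg 0$ for every $\lambda > 0$, proving (ii).

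For (ii)$\Rightarrow$(i), I would assemble three ingredients. First, I would extract strong convergence from the Pichór--Rudnicki--Gerlach framework: under the partial integral operator hypothesis and irreducibility, a stochastic $C_0$-semigroup on $L^1$ whose peripheral point spectrum is trivial converges strongly to the rank-one projection $h \otimes \one$. Second, I would use hypothesis (ii) to rule out peripheral eigenvalues of $A$ other than $0$: if $A f = \ui \alpha f$ with $\alpha \neq 0$ and $f \neq 0$, then pairing with a $g$ supplied by (ii) gives $\langle f, R(\lambda, A)' g \rangle = (\lambda - \ui \alpha)^{-1} \langle f, g \rangle$, and a de Pagter-type positivity argument, exploiting that $R(\lambda, A)' g \gg 0$, would force $\abs{f}$ to be a strictly positive fixed element and hence $\alpha = 0$. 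Third, I would upgrade strong to operator-norm convergence using the partial integral structure: the integral operator $K$ with $0 \leq K \leq T(t_0)$, iterated and spread across $\Omega$ by irreducibility, produces a compact contribution in some iterate $T(n t_0)$, and the resulting quasi-compactness of the semigroup, together with trivial peripheral spectrum, forces convergence to $h \otimes \one$ in operator norm.

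The main obstacle lies in the third step of the reverse direction. A single partial integral operator is generally not compact, so the classical Yosida--Kakutani quasi-compactness machinery does not apply directly, and one must combine iteration with irreducibility to spread the smoothing effect of $K$ over all of $\Omega$. Precisely here condition (ii) earns its keep: the strict positivity of the dual resolvent images provides the uniform quantitative lower bound that distinguishes operator-norm decay from mere strong decay, and it is what upgrades the strong convergence given by the Pichór--Rudnicki--Gerlach theory to uniform convergence in the initial data.
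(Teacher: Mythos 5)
First, a structural point: the paper does not prove this theorem at all --- it is quoted from \cite{Glueck2022} (Corollary~1.2 there) and used as a black box, so there is no in-paper proof to compare your argument against; what follows is an assessment of your sketch on its own merits. Your forward implication (i)$\Rightarrow$(ii) is essentially sound: stochasticity forces any operator-norm limit to be a non-zero projection, irreducibility forces it to be the rank-one projection $f \mapsto \langle \one, f\rangle h$ with $h > 0$ almost everywhere (a small caveat: $h$ is a quasi-interior point, i.e.\ a.e.\ positive, not necessarily bounded away from $0$ as your notation $h \gg 0$ suggests, but $\langle h, g\rangle > 0$ is all you need), and the Laplace-transform representation of $R(\lambda,A)'$ then yields (ii).

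The reverse implication has a genuine gap at its load-bearing step. Your plan is: strong convergence from the Pichór--Rudnicki theory, then an upgrade to operator-norm convergence via ``quasi-compactness'' obtained by iterating the integral operator $K \le T(t_0)$. Two problems. First, the Pichór--Rudnicki asymptotic stability theorems require an invariant density, whose existence you assert (via $h$) but never derive; in the generality of the statement it must itself be extracted from (ii), since an irreducible stochastic semigroup may be sweeping. Second, and more seriously, dominating a non-zero integral operator does not make any iterate $T(nt_0)$ quasi-compact: for that you would need $\lVert T(nt_0) - K_n\rVert < 1$ for some compact $K_n$, which amounts to a \emph{uniform} Doeblin-type lower bound $T(t_1)f \ge h_0$ (up to small error) holding simultaneously for all densities $f$, with $h_0 \ge 0$ non-zero and independent of $f$. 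This is exactly where (ii) must be used quantitatively: dualizing $R(\lambda,A)'g \ge c\one$ gives $\langle R(\lambda,A)f, g\rangle \ge c\lVert f\rVert_1$ uniformly over $0 \le f$, which, composed with a kernel lower bound $k \ge \delta \one_{A\times B}$ for (a restriction of) $K$, produces such a uniform lower bound; the classical Lasota--Yorke/Doeblin contraction argument then gives operator-norm convergence directly, with no separate strong-convergence stage. You correctly sense that (ii) ``provides the uniform quantitative lower bound'', but the sketch never actually constructs it, so the passage from strong to uniform convergence is unjustified as written. (Your step ruling out peripheral eigenvalues is also misplaced: for an irreducible, partially integral stochastic semigroup with an invariant density the peripheral point spectrum is trivial without invoking (ii).)
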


So to apply Theorem~\ref{theorem:glueck-martin} to the semigroup $(S(t))_{t \geq 0}$ generated by $\calA + \calB$, we need to show that the following two assertions hold:

\begin{enumerate}[label = \upshape (\alph*)]
    \item\label{item:to-show-a} For each non-zero $0 \leq g \in L^\infty([0, 1]; \C^2)$ there exists some $\lambda > 0$ and some $c > 0$ such that 
    \begin{align*}
        R(\lambda, \calA + \calB)' g \geq c \cdot \one.
    \end{align*}
    This is shown in Section~\ref{section:dual-resolvent-positivity-improving}.
    \item\label{item:to-show-b} There exists a time $t_0 > 0$ such that $S(t_0)$ is a partial integral operator. This is shown in Section~\ref{section:partial-integral-operator}. 
\end{enumerate}
Note that, as discussed at the beginning of Section~\ref{section:setting-the-stage}, the space $L^1([0, 1]; \C^2)$ is isometrically isomorphic to a scalar-valued $L^1$-space due to the choice of the norm. In particular, the above theorem is applicable.

\subsection{The dual resolvent improves positivity} \label{section:dual-resolvent-positivity-improving}

The next proposition shows that condition \ref{item:to-show-a} is true.

\begin{proposition} \label{proposition:dual-resolvent-positivity-improving}
    For each non-zero $0 \leq g \in L^\infty([0, 1]; \C^2)$ there exists some $\lambda > 0$ and some $c > 0$ such that $R(\lambda, \calA + \calB)' g \geq c \one$.
\end{proposition}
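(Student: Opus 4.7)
The plan is to decompose $\calB$ into a positive swap-like part and a negative diagonal part, iterate the resolvent identity to build an explicit positive lower bound for $R(\lambda, \calA + \calB)$, and finally verify the bounded-below condition via the explicit dual resolvent formulas of Corollary~\ref{corollary:dual-resolvent-identity}. To this end, I would first write $\calB = \calB_+ - \calD$, where $\calD = \diag(\nu, \mu)$ is a bounded positive multiplication operator and
\begin{align*}
    \calB_+ = \begin{pmatrix} 0 & \mu \\ \nu & 0 \end{pmatrix}
\end{align*}
is bounded and positive. Since $\nu, \mu$ are continuous and strictly positive on the compact interval $[0, 1]$, there exist constants $0 < c_0 \le C$ with $c_0 \le \nu, \mu \le C$ on $[0, 1]$; this yields $\calB_+ \ge c_0 J$ for the coordinate swap $J = \bigl(\begin{smallmatrix} 0 & 1 \\ 1 & 0 \end{smallmatrix}\bigr)$ and, via the decomposition $\calA - \calD = (\calA - C \cdot \id) + (C \cdot \id - \calD)$ with the positive perturbation $C \cdot \id - \calD \ge 0$, also $R(\lambda, \calA - \calD) \ge R(\lambda + C, \calA)$ for all $\lambda > 0$ by standard positive perturbation theory.

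Next, since $\calB_+$ is itself a bounded positive perturbation of $\calA - \calD$, iterating the resolvent identity $R(\lambda, \calA + \calB) = R(\lambda, \calA - \calD) + R(\lambda, \calA + \calB)\,\calB_+\,R(\lambda, \calA - \calD)$ produces, for every $N \ge 0$ and by positivity of all summands, the lower bound
\begin{align*}
    R(\lambda, \calA + \calB) \ge \sum_{n = 0}^{N} c_0^n\, R(\lambda + C, \calA)\bigl[J\, R(\lambda + C, \calA)\bigr]^n.
\end{align*}
Writing $\tilde R_i := R(\lambda + C, C_i)'$ so that $R(\lambda + C, \calA)' = \diag(\tilde R_1, \tilde R_2)$, I would then dualize this estimate and evaluate it on a nonzero $0 \le g = (g_1, g_2) \in L^\infty([0, 1]; \C^2)$. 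Reading off the $n = 1$ and $n = 2$ contributions in coordinates, and assuming without loss of generality that $g_1 \ne 0$ (the case $g_2 \ne 0$ being symmetric), this gives
\begin{align*}
    R(\lambda, \calA + \calB)' g \ge c_0 \bigl(0,\ \tilde R_2 \tilde R_1 g_1\bigr) + c_0^2 \bigl(\tilde R_1 \tilde R_2 \tilde R_1 g_1,\ 0\bigr).
\end{align*}
Since $\tilde R_1 \one = (\lambda + C)^{-1} \one$ by Corollary~\ref{corollary:dual-resolvent-identity}, a uniform bound $\tilde R_2 \tilde R_1 g_1 \ge \delta \one$ would immediately yield $\tilde R_1 \tilde R_2 \tilde R_1 g_1 \ge \delta (\lambda + C)^{-1} \one$, so the claim reduces to showing $\tilde R_2 \tilde R_1 g_1 \gg 0$.

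The hard part will be exactly this final positivity chain. From Corollary~\ref{corollary:dual-resolvent-identity} the function $y \mapsto y^{-(\lambda + C)} \int_0^y g_1(x) x^{\lambda + C - 1}\,\ud x$ is a continuous representative of $\tilde R_1 g_1$ on $(0, 1]$ converging to $K := \int_0^1 g_1(x) x^{\lambda + C - 1}\,\ud x > 0$ as $y \to 1$; hence $\tilde R_1 g_1 \ge K/2$ on some interval $[\alpha, 1]$ with $\alpha < 1$. Splitting the integration range $[y, 1]$ at $\alpha$ in the explicit formula for $\tilde R_2$ and estimating each piece elementarily then gives $\tilde R_2 \tilde R_1 g_1 \ge K (1 - \alpha)^{\lambda + C}/(2(\lambda + C))\,\one$ on $[0, 1]$, as required. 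The subtlety is that $\tilde R_1 g_1$ may vanish on an initial subinterval of $[0, 1]$ when $g_1$ is supported away from $0$ and is therefore in general not uniformly positive there; the uniform lower bound only emerges after the ``right-to-left'' integration structure of $\tilde R_2$ transports the positivity of $\tilde R_1 g_1$ near $y = 1$ back to the full interval.
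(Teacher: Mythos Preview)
Your argument is correct and follows essentially the same strategy as the paper: lower-bound the coupling in $\calB$ by a constant positive matrix, expand the resolvent to isolate products of the scalar dual resolvents $R(\cdot, C_2)' R(\cdot, C_1)'$, and then use the explicit formulas of Corollary~\ref{corollary:dual-resolvent-identity} to show that these products send any nonzero positive function to a function bounded away from zero. One minor slip: since $\calA = \diag(b\,C_1,\, d\,C_2)$ one has $R(\lambda + C, \calA)' = \diag\bigl(b^{-1} R((\lambda+C)/b, C_1)',\, d^{-1} R((\lambda+C)/d, C_2)'\bigr)$ rather than $\diag(\tilde R_1, \tilde R_2)$, but this only inserts harmless positive constants; in fact your finite iteration of the resolvent identity is slightly cleaner than the paper's Neumann-series route via Lemma~\ref{lemma:lambda-0-epsilon}, as it works for every $\lambda > 0$ and avoids the need to choose $\lambda$ large.
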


The following lemma simplifies estimates later in the proof of Proposition~\ref{proposition:dual-resolvent-positivity-improving}.

\begin{lemma} \label{lemma:lambda-0-epsilon}
There exists $\varepsilon > 0$, $\gamma > 0$ and some $\lambda_0 > 0$ such that the following assertions hold:
\begin{enumerate}[label = \upshape (\roman*)]
    \item $R(\lambda, \calA + \calB) \geq R(\lambda + \gamma, \calA + \calE)$ for all $\lambda > \lambda_0$. 
    \item $\norm{\calE R(\lambda, \calA)} < 1$ for all $\lambda > \lambda_0$.
\end{enumerate}
Here $\calE \in \calL(L^1([0, 1]; \C^2))$ is given by
\begin{align*}
    \calE f \coloneqq \varepsilon
    \begin{pmatrix}
        1 & 1 \\
        1 & 1
    \end{pmatrix} f \qquad \text{for all } f \in L^1([0, 1]; \C^2). 
\end{align*}
\end{lemma}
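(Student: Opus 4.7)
The plan is to select constants so that (i) reduces to a resolvent comparison between generators of positive semigroups and (ii) follows from a contraction estimate on the resolvent of $\calA$. Concretely, I would first exploit the compactness of $[0,1]$ together with the strict positivity and continuity of $\nu$ and $\mu$ to choose
\[
\varepsilon := \tfrac{1}{2} \min\bigl\{\min_{x \in [0,1]} \nu(x),\, \min_{x \in [0,1]} \mu(x)\bigr\} > 0 \quad \text{and} \quad \gamma := \varepsilon + \max\bigl\{\|\nu\|_\infty,\|\mu\|_\infty\bigr\}.
\]
With these choices, the matrix of multiplication operators $\calB + \gamma I - \calE$ has the four entries
\[
\gamma - \nu(x) - \varepsilon, \qquad \mu(x) - \varepsilon, \qquad \nu(x) - \varepsilon, \qquad \gamma - \mu(x) - \varepsilon,
\]
each of which is pointwise nonnegative on $[0,1]$. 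Under the identification of $L^1([0,1]; \C^2)$ with a scalar-valued $L^1$-space on two disjoint copies of $[0,1]$ discussed in Section~\ref{section:setting-the-stage}, this amounts to $\calB + \gamma I - \calE \geq 0$ as a bounded operator.

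To derive (i), I would rewrite
\[
\calA + \calB = (\calA + \calE - \gamma I) + (\calB + \gamma I - \calE).
\]
The operator $\calA$ generates a positive (in fact stochastic) $C_0$-semigroup by Proposition~\ref{proposition:generated-semigroups} applied componentwise, so $\calA + \calE - \gamma I$ generates a positive $C_0$-semigroup as well, since $\calE \geq 0$ is bounded and the scalar shift $-\gamma I$ preserves positivity. The standard resolvent domination for positive perturbations (e.g., via the Dyson--Phillips series applied to the bounded, positive perturbation $\calB + \gamma I - \calE$) then yields, for all $\lambda$ sufficiently large,
\[
R(\lambda, \calA + \calB) \geq R(\lambda, \calA + \calE - \gamma I) = R(\lambda + \gamma, \calA + \calE),
\]
which is exactly (i). For (ii), since $\calA$ generates a contraction semigroup, we have $\|R(\lambda, \calA)\| \leq 1/\lambda$ for all $\lambda > 0$; combined with the elementary norm identity $\|\calE\|_{\calL(L^1([0,1]; \C^2))} = 2\varepsilon$ (optimiser: $(f_1, 0)$ with $0 \leq f_1 \in L^1$), this gives $\|\calE R(\lambda, \calA)\| \leq 2\varepsilon / \lambda < 1$ whenever $\lambda > 2\varepsilon$. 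Taking $\lambda_0$ larger than both $2\varepsilon$ and the threshold required in the previous step closes the argument.

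The content is elementary once the constants are pinned down; the only real subtlety is that the single pair $(\varepsilon, \gamma)$ must simultaneously render all four entries of $\calB + \gamma I - \calE$ nonnegative and force $2\varepsilon / \lambda$ below $1$ for the same range of $\lambda$. Since $\varepsilon$ can be chosen arbitrarily small (provided it stays below $\min \nu$ and $\min \mu$) and $\gamma$ is then determined by an upper bound on $\max \nu$ and $\max \mu$, no tension arises between the two requirements, and the construction goes through.
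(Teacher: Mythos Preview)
Your proposal is correct and follows essentially the same route as the paper: choose $\varepsilon$ and $\gamma$ so that $\calB + \gamma I - \calE$ is a positive bounded operator, write $\calA + \calB = (\calA + \calE - \gamma I) + (\calB + \gamma I - \calE)$, and deduce the resolvent domination from positivity of the perturbation (the paper does this via the Neumann series for the resolvent rather than Dyson--Phillips at the semigroup level, but the content is the same); part~(ii) is handled identically via $\|R(\lambda,\calA)\| \le 1/\lambda$ and $\|\calE\| = 2\varepsilon$. The only cosmetic difference is that you pin down explicit values for $\varepsilon$ and $\gamma$, whereas the paper specifies them by inequalities and enlarges $\lambda_0$ as needed.
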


\begin{proof}
    Fix $\gamma > \max \set{\norm{\nu}_\infty, \norm{\mu}_\infty}$ and pick $\varepsilon > 0$ such that 
    \begin{align*}
        \varepsilon < \gamma - \max \set{\norm{\nu}_\infty, \norm{\mu}_\infty} \quad \text{and} \quad \varepsilon < \min \set[\bigg]{\min_{x \in [0, 1]} \nu(x), \min_{x \in [0, 1]} \mu(x)}. 
    \end{align*}
    Such an $\varepsilon$ exists, since $\nu$ and $\mu$ are strictly positive and continuous. Now pick $\lambda_0 > \max \set{\gamma, 2 \varepsilon}$ (we may later enlarge $\lambda_0$ further if necessary). We show that the assertions (i) and (ii) hold for our choices of $\varepsilon$ and $\lambda_0$.

    (i): By the choice of $\varepsilon$ and $\gamma$ one has
    \begin{align*}
        \calA + \calB + \gamma I = \calA + \calE + (\calB + \gamma I - \calE),
    \end{align*}
    where $\calB + \gamma I - \calE$ is a positive operator.
    Moreover, since $\calA$ is the generator of a stochastic semigroup and $\calE$ is a positive operator, it follows from the Trotter product formula $\ue^{t(\calA+ \calE)}f = \lim_{n \to \infty} \ue^{\tfrac{t}{n} \calA} \ue^{\tfrac{t}{n} \calE}f$ for all $f \in L^1((0,1);\bbC^2)$ (see, e.g.,~\cite[Corollary~III.5.8]{Engel2000}) that the semigroup generated by $\calA + \calE$ is positive. Hence, the integral representation of the resolvent implies that $R(\lambda, \calA + \calE)$ is positive for all $\lambda > \lambda_0$ with $\lambda_0$ large enough. Moreover, it follows from the Hille-Yosida theorem and by choosing $\lambda_0$ even larger that
    \begin{align*}
        \norm{(\calB + \gamma \id - \calE) R(\lambda, \calA + \calE)} & \leq \norm{(\calB + \gamma \id - \calE)} \norm{R(\lambda, \calA + \calE)} \\
        & \leq \norm{(\calB + \gamma \id - \calE)} \cdot \frac{1}{\lambda - \omega} < 1
    \end{align*}
    for all $\lambda > \lambda_0$, where $\omega \in \bbR$ is the growth bound of the semigroup generated by $\calA + \calE$. Thus, by a consequence of the Neumann series expansion (see, e.g., \cite[Theorem 4.5]{Rudnicki2017}), we obtain that
    \begin{align*}
       R(\lambda, \calA + \calB) & = R(\lambda + \gamma, \calA + \calB + \gamma I) = R(\lambda + \gamma, \calA + \calE + (\calB + \gamma I - \calE)) \\
       & = R(\lambda + \gamma, \calA + \calE) \sum_{k = 0}^\infty \big( (\calB + \gamma I - \calE) R(\lambda + \gamma, \calA + \calE) \big)^k \\
       & \geq R(\lambda + \gamma, \calA + \calE)
    \end{align*}
    for all $\lambda > \lambda_0$.
    
    (ii): Since $\calA$ is the generator of a stochastic semigroup, we have
    \begin{align*}
        \norm{\calE R(\lambda, \calA)}
        \leq \norm{\calE} \cdot \norm{R(\lambda, \calA)} 
        \leq \frac{2 \varepsilon}{\lambda} < \frac{2 \varepsilon}{\lambda_0} < 1
    \end{align*}
    for all $\lambda > \lambda_0$ by the Hille-Yosida theorem.
\end{proof}

We are now in the position to prove Proposition~\ref{proposition:dual-resolvent-positivity-improving}.

\begin{proof}[Proof of Proposition~\ref{proposition:dual-resolvent-positivity-improving}]
By Lemma~\ref{lemma:lambda-0-epsilon}, there exist $\varepsilon > 0$, $\gamma > 0$ and $\lambda_0 > \gamma$ such that 
\begin{align*}
   R(\lambda - \gamma, \calA + \calB) \geq R(\lambda, \calA + \calE),
\end{align*}
and thus,
\begin{align*}
    R(\lambda - \gamma, \calA + \calB)' \geq R(\lambda, \calA + \calE)',
\end{align*}
and $\norm{R(\lambda, \calA)' \calE'} = \norm{\calE R(\lambda, \calA)} < 1$ for all $\lambda > \lambda_0$.
We show that there exists $\lambda > \lambda_0$ such that $R(\lambda, \calA + \calE)' \gg 0$. So, by \cite[Theorem 4.5]{Rudnicki2017}, one has for $\lambda > \lambda_0$ that
\begin{align*}
    R(\lambda, \calA + \calB)' \geq R(\lambda, \calA + \calE)' = \sum_{k = 0}^\infty (R(\lambda, \calA)' \calE')^k R(\lambda, \calA)' \geq (R(\lambda, \calA)' \calE')^2 R(\lambda, \calA)'.
\end{align*}
Moreover, 
\begin{align*}
    R(\lambda, \calA)' \calE' &=  
    \begin{pmatrix}
        b^{-1} R(b^{-1} \lambda, C_1)' & 0 \\
        0 & d^{-1} R(d^{-1} \lambda, C_2)'
    \end{pmatrix}
    \begin{pmatrix}
        \varepsilon & \varepsilon \\
        \varepsilon & \varepsilon
    \end{pmatrix} \\
    &= \varepsilon
    \begin{pmatrix}
        b^{-1} R(b^{-1}\lambda, C_1)' & b^{-1} R(b^{-1}\lambda, C_1)' \\
        d^{-1} R(d^{-1}\lambda, C_2)' & d^{-1} R(d^{-1}\lambda, C_2)'
    \end{pmatrix}
\end{align*}
and hence
\begin{align*}
   (R(\lambda, \calA)' \calE')^2 &= 
    \varepsilon^2
    \begin{pmatrix}
        b^{-1} R(b^{-1}\lambda, C_1)' & b^{-1} R(b^{-1}\lambda, C_1)' \\
        d^{-1} R(d^{-1}\lambda, C_2)' & d^{-1} R(d^{-1}\lambda, C_2)'
    \end{pmatrix}^2 \\
    &\geq
    \frac{\varepsilon^2}{b d}
    \begin{pmatrix}
        R(b^{-1}\lambda, C_1)' R(d^{-1}\lambda, C_2)' & R(b^{-1}\lambda, C_1)' R(d^{-1}\lambda, C_2)' \\
        R(d^{-1}\lambda, C_2)' R(b^{-1}\lambda, C_1)' & R(d^{-1}\lambda, C_2)' R(b^{-1}\lambda, C_1)'
    \end{pmatrix} .
\end{align*}
So altogether, we have
\begin{align} \label{inq:dual-resolvent-estimate}
    R(\lambda, \calA + \calE)' g \geq \frac{\varepsilon^2}{b d}
    \begin{pmatrix}
        R(b^{-1}\lambda, C_1)' R(d^{-1}\lambda, C_2)' & R(b^{-1}\lambda, C_1)' R(d^{-1}\lambda, C_2)' \\
        R(d^{-1}\lambda, C_2)' R(b^{-1}\lambda, C_1)' & R(d^{-1}\lambda, C_2)' R(b^{-1}\lambda, C_1)'
    \end{pmatrix}
    \widetilde g,
\end{align}
where $\widetilde g \coloneqq R(\lambda, \calA)' g \geq 0$ is non-zero, whenever $0 \leq g \in L^\infty([0, 1]; \C^2)$ is non-zero. 

So to conclude the proof, it suffices to show that
\begin{align*}
    R(d^{-1}\lambda, C_2)' R(b^{-1}\lambda, C_1)' h \gg 0 \quad \text{and} \quad R(b^{-1}\lambda, C_1)' R(d^{-1}\lambda, C_2)' h \gg 0 
\end{align*}
for each non-zero $0 \leq h \in L^\infty([0,1])$. Let $0 \leq h \in L^\infty([0,1])$ be non-zero and choose $\lambda > \max \{ \lambda_0, b, d, \gamma \}$. Then, by Corollary~\ref{corollary:dual-resolvent-identity}, we have
\begin{align*}
    R(b^{-1}\lambda, C_1)' h(y) = \frac{1}{y^{\lambda/b}} \int_0^y h(x) x^{(\lambda - b)/b} \, \ud x.
\end{align*}
By \cite[Lemma~8.2]{Brezis2011}, there exists $\alpha \in (0, 1)$ and $\delta > 0$ such that
\begin{align*}
    R(b^{-1}\lambda, C_1)' h(y) \geq \frac{\delta}{y^{\lambda/b}} \geq \delta, \qquad \text{for all } y \in [\alpha, 1]. 
\end{align*}
Using Corollary~\ref{corollary:dual-resolvent-identity}, again, we obtain for all $z \in [0,1]$ that
\begin{align*}
    R(d^{-1}\lambda, C_2)' R(b^{-1}\lambda, C_1)' h(z) &= \frac{1}{(1 - z)^{\lambda/d}} \int_z^1 R(b^{- 1}\lambda, C_1)' h(y) (1 - y)^{\lambda/d - 1} \, \ud y \\
    &\geq \frac{1}{(1 - z)^{\lambda/d}} \int_{\alpha \vee z}^1 \delta (1 - y)^{\lambda/d - 1} \, \ud y  \\
    &= \frac{d \delta}{\lambda} \frac{(1 - (\alpha \vee z))^{\lambda/d}}{(1 - z)^{\lambda/d}} \geq \frac{d \delta}{\lambda} (1 - \alpha)^{\lambda/d}.
\end{align*}
So, all in all,
\begin{align*}
   R(d^{-1}\lambda, C_2)' R(b^{-1}\lambda, C_1)' h \geq d \delta (1 - \alpha)^{\lambda/d} \one \gg 0. 
\end{align*}
Completely analogously, one shows that $R(b^{-1}\lambda, C_1)' R(d^{-1}\lambda, C_2)' h \gg 0$. Thus, the claim follows as a consequence of the estimate \eqref{inq:dual-resolvent-estimate}.
\end{proof}

\subsection{Partial integral operator} \label{section:partial-integral-operator}

To apply Theorem~\ref{theorem:glueck-martin} it remains to show that the semigroup $(S(t))_{t \geq 0}$ generated by $\calA + \calB$ contains a partial integral operator, i.e., that condition~\ref{item:to-show-b} is satisfied.

\begin{proposition}\label{prop:partial-integral-operator}
    Let ${(S(t))}_{t \geq 0}$ be the semigroup generated by $\calA + \calB$. Then there exists $t_0 > 0$ such that $S(t_0)$ is a partial integral operator.
\end{proposition}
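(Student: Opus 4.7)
The plan is to minorize $S(t)$ by the one‑jump term of a Dyson--Phillips expansion and to show that this minorant already contains a non‑zero integral operator in one of its off‑diagonal blocks. I split $\calB = \calB_d + \calB_o$ with $\calB_d = \diag(-\nu,-\mu)$ and
\[
\calB_o \coloneqq \begin{pmatrix} 0 & \mu \\ \nu & 0 \end{pmatrix} \geq 0.
\]
Then $\calA + \calB_d$ is block‑diagonal, and by Proposition~\ref{proposition:generated-semigroups} it generates a positive $C_0$‑semigroup $(U(t))_{t \geq 0}$, with first component given explicitly by
\[
[U(t) f]_1(x) = \ue^{bt}\exp\Bigl(-\int_0^t \nu(x \ue^{br})\, \ud r\Bigr) f_1(x\ue^{bt})
\]
on the support $\{x\ue^{bt} \leq 1\}$, and an analogous expression on the second component (using $\mu$ and $T_2(dt)$). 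Since $\calB_o \geq 0$ is bounded, the Dyson--Phillips series $S(t) = \sum_{n} S_n(t)$ with $S_0 = U$ and $S_{n+1}(t) = \int_0^t U(t-s)\calB_o S_n(s)\,\ud s$ consists of positive operators, so
\[
S(t) \;\geq\; S_1(t) \;=\; \int_0^t U(t-s)\,\calB_o\, U(s)\,\ud s.
\]

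Unfolding the transport formulas, the $(2,1)$‑block of $S_1(t)$ applied to $(f_1, 0)^\top$ takes the form
\[
[S_1(t)(f_1, 0)^\top]_2(x) \;=\; \int_0^t G(s, x)\, f_1\bigl(\xi(s, x)\bigr)\,\ud s,
\]
where $\xi(s, x) = \bigl(1 - (1-x)\ue^{d(t-s)}\bigr)\ue^{bs}$ is the composition of the two characteristic flows (first $T_2$ from $x$ to $z(s,x) = 1 - (1-x)\ue^{d(t-s)}$, then $T_1$ from $z$ to $z\,\ue^{bs}$), and $G(s,x) > 0$ collects the strictly positive damping exponentials, the factor $\nu(z(s,x))$, and $\ue^{bs + d(t-s)}$, restricted to the region where $z, \xi \in [0,1]$. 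A short computation yields
\[
\partial_s \xi(s, x) = \ue^{bs}\bigl[b - (1-x)(b - d)\,\ue^{d(t-s)}\bigr],
\]
which equals $b\ue^{bs} > 0$ at $x = 1$; by continuity $\partial_s \xi$ stays bounded below by a positive constant on a neighbourhood $V$ of $\{1\} \times [0, t_0]$ for every sufficiently small $t_0 > 0$. On $V$ the map $s \mapsto \xi(s, x)$ is a $C^1$‑diffeomorphism onto its image, and the change of variables $s \mapsto \xi$ rewrites the integral as $\int k_{21}(x, \xi)\, f_1(\xi)\,\ud\xi$ with $k_{21}(x,\xi) = G/\lvert \partial_s \xi\rvert > 0$ on the corresponding open set $W \subset [0,1]^2$. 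Choosing $t_0$ so that $W$ has positive two‑dimensional Lebesgue measure, $[S_1(t_0)]_{21}$ is therefore a non‑zero positive integral operator on $L^1([0,1])$.

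Finally, the block operator $K$ on $L^1([0,1]; \C^2)$ defined by $K(f_1, f_2)^\top = \bigl(0,\, [S_1(t_0)]_{21} f_1\bigr)^\top$ is a non‑zero integral operator (via the canonical identification of $L^1([0,1]; \C^2)$ with a scalar‑valued $L^1$‑space), and since $S_1(t_0) \geq 0$ has all blocks non‑negative with $(2,1)$‑block equal to $[S_1(t_0)]_{21}$, one has $0 \leq K \leq S_1(t_0) \leq S(t_0)$, exhibiting $S(t_0)$ as a partial integral operator. The step I expect to be the main obstacle is the geometric bookkeeping in the second paragraph: intersecting the constraints $z(s,x) \in [0,1]$ and $\xi(s,x) \in [0,1]$ to carve out a region of positive two‑dimensional measure in $(s, x)$‑space on which $\partial_s \xi$ is non‑degenerate, and verifying that the image $W$ under $(x, s) \mapsto (x, \xi(s, x))$ likewise has positive measure. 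The remaining ingredients---positivity of $U$, positivity of $\calB_o$, and termwise positivity of the Dyson--Phillips series---are standard.
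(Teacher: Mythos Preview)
Your approach is correct and shares the paper's core idea: minorize $S(t)$ by the first Dyson--Phillips correction term and exhibit a non-zero integral kernel in an off-diagonal block via the change of variables $s\mapsto\xi(s,x)$. The execution differs in two respects. First, your splitting $\calB=\calB_d+\calB_o$ is more direct: you absorb the diagonal part into the base semigroup $U$ and perturb by the positive off-diagonal $\calB_o$, whereas the paper first replaces $\calB$ by the constant matrix $\calE$ (via Lemma~\ref{lemma:lambda-0-epsilon} and Post--Widder) and only then runs Dyson--Phillips with the bare $\calA$-semigroup as base. Your route avoids this detour at the cost of carrying the harmless damping factors in $G$. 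Second, for the monotonicity of $s\mapsto\xi(s,x)$ the paper argues globally, splitting into the cases $d\leq b$ and $b\leq d$ and choosing the appropriate off-diagonal block in each; this makes the substitution valid on the full support of the integrand. Your local argument near $x=1$ also works and is case-free, but note that at $x=1$ one has $\xi(s,1)=\ue^{bs}>1$ for $s>0$, so the support of $G$ in your neighbourhood $V$ is only the wedge $\{(x,s):0\leq s\leq s^*(x)\}$ with $s^*(x)\to 0$ as $x\to 1$; this wedge still has positive measure and maps to a region of positive measure in $(x,\xi)$, so the conclusion holds, but the ``geometric bookkeeping'' you anticipate is exactly this point and deserves to be made explicit.
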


\begin{proof}
\begin{enumerate}[label = \emph{Step~\arabic*:}, wide, leftmargin = 0em, labelindent = 0em, itemsep = 1em]
    \item We show that there exist $\varepsilon > 0$ and $\gamma > 0$ such that the semigroup $(S(t))_{t \geq 0}$ generated by $\calA + \calB$ and the semigroup $(S_\varepsilon(t))_{t \geq 0}$ generated by $\calA + \calE$, where $\calE$ is defined as in Lemma~\ref{lemma:lambda-0-epsilon}, satisfy  $\ue^{\gamma t} S(t) \geq S_\varepsilon(t)$.

    Indeed, by Lemma~\ref{lemma:lambda-0-epsilon}, there exist $\varepsilon > 0$, $\gamma > 0$ and $\lambda_0 > 0$ such that $R(\lambda, \calA + \calB) \geq R(\lambda + \gamma, \calA + \calE) = R(\lambda, \calA + \calE - \gamma)$ for all $\lambda > \lambda_0$. Hence, the Post--Widder inversion formula (see~\cite[Corollary III.5.5]{Engel2000}) yields
    \begin{align}
        S(t) \geq \ue^{-\gamma t} S_\varepsilon(t) \qquad \text{for all } t \geq 0.
    \end{align}

    \item We use a perturbation argument to obtain a lower bound for the stochastic semigroup $(T(t))_{t \geq 0}$ generated by $\calA$ (cf. Proposition~\ref{proposition:generated-semigroups}): As $\calE$ is a bounded operator, the operator $\calA_\varepsilon \coloneqq \calA + \calE$ generates a $C_0$-semigroup $(S_\varepsilon(t))_{t \geq 0}$ and this semigroup is given by the Dyson--Phillips series 
    \begin{align*}
        S_\varepsilon(t) &= \sum_{k = 0}^\infty U_k(t), \qquad \text{for all } t \geq 0
    \end{align*}
    (see, e.g., \cite[Theorem~III.1.10]{Engel2000} or \cite[Section~11.2]{Batkai2017}), where
    \begin{align*}
        U_0(t) \coloneqq T(t), \quad U_k(t) &\coloneqq \int_0^t T(t - s) \calE U_{k - 1}(s) \, \ud s \qquad \text{for all } t \geq 0, \, k \in \bbN.
    \end{align*}
    Clearly, $S_\varepsilon(t) \geq U_1(t)$ for all $t \geq 0$.

    \item  Next, we decompose $U_1(t)$ into operators, which we later show to be partial integral operator for some $t > 0$. This then implies that $U_1(t)$ is a partial integral operator, which in turn implies that $S_\varepsilon(t)$, and thus, $S(t)$ are partial integral operators.
    
    Recall from Proposition~\ref{proposition:generated-semigroups} the definition of $(T_1(t))_{t \geq 0}$ and $(T_2(t))_{t \geq 0}$.
    Clearly,
\begin{align*}
    U_1(t) f &= 
    \int_0^t
    \begin{pmatrix}
        T_1(b(t-s)) & 0 \\
        0 & T_2 (d(t-s))
    \end{pmatrix}
    \begin{pmatrix}
        \varepsilon & \varepsilon \\
        \varepsilon & \varepsilon
    \end{pmatrix}
    \begin{pmatrix}
        T_1(bs) & 0 \\
        0 & T_2 (ds)
    \end{pmatrix} f \, \ud s \\
    &= \int_0^t \varepsilon
    \begin{pmatrix}
        T_1(b(t-s)) & T_1(b(t-s)) \\
        T_2(d(t-s)) & T_2 (d(t-s))
    \end{pmatrix}
    \begin{pmatrix}
        T_1(bs) & 0 \\
        0 & T_2 (ds)
    \end{pmatrix} f \, \ud s \\
    &= \int_0^t \varepsilon
    \begin{pmatrix}
        T_1(b(t-s))T_1(bs) & T_1(b(t-s))T_2(ds) \\
        T_2(d(t-s)) T_1(bs) & T_2 (d(t-s))T_2(bs)
    \end{pmatrix} f \, \ud s,
\end{align*}
dominates both functions
\begin{align*}
        \int_0^t \varepsilon
    \begin{pmatrix}
        0 & T_1(b(t-s))T_2(ds) \\
        0 & 0
    \end{pmatrix} f \, \ud s \quad \text{and} \quad \int_0^t \varepsilon
    \begin{pmatrix}
        0 & 0 \\
        T_2(d(t-s)) T_1(bs) & 0
    \end{pmatrix} f \, \ud s
\end{align*}
for all $f \in L^1([0, 1]; \C^2)$. Thus, it is sufficient to show that either
\begin{align} \label{eq:scalar-integral-operator-case-1}
   \int_0^t T_1(b(t-s))T_2(ds) \, \ud s \in \calL(L^1([0,1]))
\end{align}
or 
\begin{align} \label{eq:scalar-integral-operator-case-2}
   \int_0^t T_2(b(t-s))T_1(ds) \, \ud s \in \calL(L^1([0,1]))
\end{align}
is a partial integral operator.

\item We show that~\eqref{eq:scalar-integral-operator-case-1} is a partial integral operator if $d \leq b$. Let $f \in L^1([0,1])$ and $t \geq 0$. Then, by Proposition~\ref{proposition:generated-semigroups}, we have
\begin{align*}
    \int_0^t T_1(b(t-s)) T_2(d s)  f(x) \, \ud s
    = \int_0^t \ue^{b(t-s)} \ue^{ds} f(1- (1 - x \ue^{b(t-s)}) \ue^{ds} ) \, \ud s
\end{align*}
for almost all $x \in [0, 1]$. As $0 < d \leq b$, the mapping
\begin{align*}
    \varphi \colon [0,1] \times [0, 1] \to \bbR, \quad (x,s) \mapsto 1- (1 - x \ue^{b(t-s)}) \ue^{ds}.
\end{align*}
has the partial derivative
\begin{align*}
    \frac{\partial}{\partial s} \varphi (x, s) = d \ue^{d s} + (b-d)x \ue^{b(t-s)} \ue^{ds} > 0.
\end{align*}
Thus, $\varphi(x, \argument)$ is strictly increasing for each $x \in [0, 1]$. A substitution yields
\begin{align*}
    \int_0^t T_1(b(t-s)) T_2(d s)  f(x) \, \ud s &= \int_0^t \ue^{b(t-s)} \ue^{ds} f(\varphi(x, s)) \, \ud s \\
    &\geq \int_0^t f(\varphi(x, s)) \, \ud s \\
    &= \int_{\varphi(x, 0)}^{\varphi(x, t)}\frac{1}{\frac{\partial}{\partial s} \varphi (x, s)} f(s) \, \ud s.
\end{align*}
Hence, $\int_0^{t} T_1(b(t-s))T_2(ds) \, \ud s$ is a partial integral operator for each $t > 0$.

\item If $b \leq d$, then the proof showing that~\eqref{eq:scalar-integral-operator-case-2} is a partial integral operator for each $t > 0$ is analogous to the proof presented in the previous step.
\end{enumerate}

In total, we have proven that $U_1(t_0)$, and thus $S(t_0)$, is a partial integral operator for some $t_0 > 0$.
\end{proof}

\subsection{Proof of the main result}

We have now gathered all the tools to conclude with the proof of Theorem~\ref{theorem:main-result}.
\begin{proof}[Proof of Theorem~\ref{theorem:main-result}]
    Without loss of generality, we may assume that $a = 0$ and $c = d$. By \cite[Proposition~4.3]{Kurasov2021}, the operator $\calA + \calB$ generates an irreducible stochastic semigroup $(S(t))_{t \geq 0}$ on $L^1([0, 1]; \C^2)$. Proposition~\ref{prop:partial-integral-operator} shows that there exists a time $t_0 > 0$ such that $S(t_0)$ is a partial integral operator. Moreover, by Proposition~\ref{proposition:dual-resolvent-positivity-improving}, for each non-zero $0 \leq g \in L^\infty([0, 1]; \C^2)$ there exists some $\lambda > 0$ such that $R(\lambda, \calA + \calB)' g \gg 0$. So, Theorem~\ref{theorem:glueck-martin} implies that $(S(t))_{t \geq 0}$ converges with respect to the operator norm as $t \to \infty$.
\end{proof}

\subsubsection*{Acknowledgements} The first-named author thanks Jochen Glück for an invitation to the University of Wuppertal, where major parts of this paper have been written.

\subsection*{Conflict of interest} The authors declare to have no conflict of interest. 

\subsection*{Data availability statement} Due to the nature of the research, there is no supporting data. 

\bibliographystyle{plain}

\bibliography{literature}

\end{document}